\newcommand{\R}{\mathbb R}
\newcommand{\Z}{\mathbb Z}
\newcommand{\p}{\partial}
\newcommand{\ve}{\varepsilon}
\newcommand{\f}{\frac}
\newcommand{\al}{\alpha}
\renewcommand{\t}{\tilde}
\newcommand{\ds}{\displaystyle}
\newcommand{\crit}{\textup{crit}}
\newcommand{\conf}{\textup{conf}}
\theoremstyle{plain}
\newtheorem{theorem}{Theorem}[section]
\newtheorem{lemma}[theorem]{Lemma}
\newtheorem{corollary}[theorem]{Corollary}
\theoremstyle{definition}
\theoremstyle{remark}
\newtheorem{remark}{Remark}[section]
\numberwithin{equation}{section}
\title{Global small data weak solutions of 2-D semilinear wave equations
 with  scale-invariant  damping, II}
\author{He Daoyin$^{1}$, \qquad  Li Qianqian$^{2}$, \qquad Yin Huicheng$^{2,}$\footnote{He Daoyin (\texttt{akwardly01@} \texttt{163.com}, \texttt{101012711@seu.edu.cn}),
Li Qianqian (\texttt{214597007@qq.com}) and Yin Huicheng
    (\texttt{huicheng@} \texttt{nju.edu.cn}, \texttt{05407@njnu.edu.cn}) are supported by the NSFC
    (No.~12331007) and by the National key research and development program of China (No. 2020YFA0713803). He Daoyin is also supported by 2242023R40009.}\vspace{0.5cm}\\
    \small 1.
  School of Mathematics, Southeast University, Nanjing
  210089, China.\\
  \small 2.  School of Mathematical Sciences and Mathematical Institute,\\
\small Nanjing Normal University, Nanjing 210023, China.\\}
\begin{document}

\date{}

\maketitle
\thispagestyle{empty}

\begin{abstract}
For the $2$-D semilinear wave equation with  scale-invariant  damping
$\p_t^2u-\Delta u+\f{\mu}{t}\p_tu=|u|^p$,
where $t\ge 1$ and $p>1$,  in
the paper [T. Imai, M. Kato, H. Takamura, K. Wakasa,
The lifespan of solutions of semilinear wave equations with the scale-invariant damping in two space dimensions,
J. Differential Equations 269 (2020), no. 10, 8387-8424],
it is conjectured that  the
global small data weak solution $u$ exists when $p>p_{s}(2+\mu)
=\f{\mu+3+\sqrt{\mu^2+14\mu+17}}{2(\mu+1)}$ for  $\mu\in (0, 2)$ and $p>p_f(2)=2$ for $\mu\ge 2$. In our previous paper,
the global small solution $u$ has been obtained for $p_{s}(2+\mu)<p<p_{conf}(2,\mu)=\f{\mu+5}{\mu+1}$
and $\mu\in(0,1)\cup(1,2)$. In the present paper, we will show the global existence of small solution $u$ for
$p\ge p_{conf}(2,\mu)$ and $\mu\in(0,1)\cup(1,2)$. In forthcoming papers, we shall show the global existence of small solution $u$ for
the remaining cases of $\mu>2, p>2$ or $\mu=1, p>p_s(\mu+2)=1+\sqrt 2$.
\end{abstract}

\noindent
\textbf{Keywords.} Conformal exponent,  generalized Tricomi equation, scale-invariant damping,

\qquad \quad Strichartz estimate,  Fourier integral operator, global existence
\vskip 0.1 true cm

\noindent
\textbf{2010 Mathematical Subject Classification} 35L70, 35L65, 35L67

\tableofcontents

\section{Introduction}
Consider the Cauchy problem of the semilinear wave equation with time-dependent damping
\begin{equation}\label{equ:eff}
\left\{ \enspace
\begin{aligned}
&\partial_t^2 u-\Delta u +\f{\mu}{t^{\beta}}\,\p_tu=|u|^p, &&
(t,x)\in [1,\infty)\times \R^{n},\\
&u(1,x)=u_0(x), \quad \partial_{t} u(1,x)=u_1(x), &&x\in\R^n,
\end{aligned}
\right.
\end{equation}
where $\mu\ge 0$, $\beta\ge 0$, $p>1$, $n\ge 1$, $t\ge 1$, $x=(x_1,\cdots, x_n)\in\Bbb R^n$,
$\p_i=\p_{x_i}$ ($1\le i\le n$),
$\Delta=\p_1^2+\cdot\cdot\cdot+\p_n^2$, $u_k\in
C_0^{\infty}(\R^n)$ and $\operatorname{supp} u_k\in B(0,M)$  ($k=0, 1$) with $M>1$.
The equation in \eqref{equ:eff} with $\beta=1$ is called the semilinear Euler-Poisson-Darboux model
for $t\ge 0$ (see \cite{DA-0} and the references therein). On the other hand, the linear wave
operator $\partial_t^2-\Delta  +\f{\mu}{t^{\beta}}\p_t$
admits some interesting physical backgrounds
which come from the compressible Euler equations with time-dependent damping,
the irrotational bipolar Euler-Poisson system
with time-dependent damping, the movement of smooth supersonic polytropic gases
in a 3-D infinitely long  divergent De Laval nozzle and so on
(see \cite{Hou-1}-\cite{Hou-2}, \cite{Chen-Mei}, \cite{Mei-2}, \cite{Xu-Yin-1}-\cite{Xu-Yin-2}
and \cite{LWY}).

When $\mu=0$, \eqref{equ:eff} becomes
\begin{equation}\label{equ:eff-1}
\left\{ \enspace
\begin{aligned}
&\partial_t^2 u-\Delta u=|u|^p, &&
(t,x)\in [1,\infty)\times \R^{n},\\
&u(1,x)=u_0(x), \quad \partial_{t} u(1,x)=u_1(x), &&x\in\R^n.
\end{aligned}
\right.
\end{equation}
For \eqref{equ:eff-1} and $n\ge2$, the author in \cite{Strauss} proposed the following well-known conjecture (Strauss' conjecture):

{\it Let
$p_s(n)$ denote the positive root of the quadratic algebraic equation
\begin{equation}\label{1.3}
\left(n-1\right)p^2-\left(n+1\right)p-2=0.
\end{equation}
When $p>p_s(n)$, the small data solution of \eqref{equ:eff-1}
will exist globally; when $1<p<p_s(n)$, the solution of
\eqref{equ:eff-1} can blow up in finite time.}

So far the Strauss' conjecture has been systematically studied and solved well, see
\cite{Gls1,Gla1, Gla2,Joh,Gls2,K,Sch,Sid,Yor,Zhou}.
Especially, in \cite{Gls1} and \cite{Gls2}, one can find the
detailed history on the studies of \eqref{equ:eff-1}.

When $\beta=0$ holds and $\mu=1$ is assumed without loss of generality,  \eqref{equ:eff} becomes
\begin{equation}\label{equ:eff-2}
\left\{ \enspace
\begin{aligned}
&\partial_t^2 u-\Delta u+\p_t u=|u|^p, &&
(t,x)\in [1,\infty)\times \R^{n},\\
&u(1,x)=u_0(x), \quad \partial_{t} u(1,x)=u_1(x), &&x\in\R^n.\\
\end{aligned}
\right.
\end{equation}
For \eqref{equ:eff-2}, it is shown that
the solution $u$ can blow up in finite time when \(1<p\le p_f(n)\) with the Fujita exponent
$p_f(n)=1+\f{2}{n}$ being derived in \cite{Fuj} for the blowup or global existence of  semilinear parabolic equation
$\partial_t v-\Delta v=|v|^p$, while the small data solution $u$ exists globally
for $p>p_f(n)$ with $n=1,2$ and $p_f(n)<p\leq\f{n}{n-2}$ with $n\ge 3$, see
\cite{C-R}, \cite{EGR}, \cite{Ha},  \cite{II}, \cite{Li}, \cite{TY},  \cite{W1} and \cite{ZQ}.

When $0<\beta <1$ and $\mu>0$, if $p>p_f(n)$ with \(n=1,2\) or \(p_f(n)<p<\frac{n+2}{n-2}\) with \(n\geq3\), then
$\eqref{equ:eff}$ has a global small data solution $u$, see \cite{Rei2}, \cite{Zhai} and \cite{N};
if $1<p\le p_f(n)$, the solution $u$ generally blows up in finite time, one can be referred to \cite{FIW} and \cite{Zhai}.

When $\beta>1$ and $\mu>0$,  the global existence or blowup results on \eqref{equ:eff} are analogous to the ones on
  \eqref{equ:eff-1}. It is shown that for \(p>p_s(n)\), the global small data solution exists (see  \cite{LW}), while the
  solution may blow up in finite time for $1<p\le p_s(n)$ (see \cite{LT} and \cite{WY}).

When $\beta=1$ and $\mu>0$, due to the critical power property of  $\beta=1$ for the long time behavior
of solutions to the linear equation $\partial_t^2 v-\Delta v+\f{\mu}{t^{\beta}}\p_tv=0$ (for $\beta>1$
or $0<\beta<1$,
the long time-decay rate of $v$ just corresponds to that of linear wave equation or linear parabolic equation, respectively,
see \cite{Wirth-1} and \cite{Wirth-2}), so far there are no systematic  results on  the global solution of \eqref{equ:eff} with $\beta=1$.
Set $p_{crit}(n,\mu)=\max\{p_s(n+\mu), p_f(n)\}$,
where the Strauss index $p_s(z)=\f{z+1+\sqrt{z^2+10z-7}}{2(z-1)}$ ($z>1$)
is a positive root of the quadratic algebraic equation
$(z-1)p^2-(z+1)p-2=0$, and $p_f(n)=1+\f{2}{n}$ is the Fujita index.
In terms of \cite{Rei1} and \cite{Imai}, there is an interesting open question as follows:

{\bf Open question (A).} {\it For $\mu>0$ and $\beta=1$, when $p>p_{crit}(n,\mu)$, the small data weak solution $u$
of \eqref{equ:eff} exists globally; otherwise, the solution $u$ may blow up in finite time when $1<p\le p_{crit}(n,\mu)$.
}

For $n=1$, Open question (A) has been solved in \cite{DA-0}.
In addition, it follows from direct computation that
\begin{equation}\label{Sun-01}
p_{crit}(n,\mu)=\left\{
\begin{aligned}
&p_s(n+\mu)\qquad&&\text{for $0<\mu<\bar \mu(n)$},\\
&p_f(n)&&\text{for $\mu\ge\bar\mu(n)$},
\end{aligned}
\right.
\end{equation}
where $\bar\mu(n)=\frac{n^2+n+2}{n+2}$. In particular, when $n=2$, one has
\begin{equation}\label{Sun-02}
p_{crit}(2,\mu)=\left\{
\begin{aligned}
&p_s(2+\mu)\qquad&&\text{for $0<\mu<2$},\\
&2&&\text{for $\mu\ge 2$}.
\end{aligned}
\right.
\end{equation}

Note that the blowup result in Open question (A)  has been shown for \(1<p\le p_{crit}(n,\mu)\)
(see \cite{IS}, \cite{LTW}, \cite{PR}, \cite{TL1}, \cite{TL2} and \cite{W1}).
However, there are only a few  results on the global existence part in Open question (A) for $n\ge 2$,
which are listed as follows:

${\bf \bullet}$
when \(n=2\) and \(\mu=2\) or \(n=2\) and \(\mu\ge 3\), or  when \(n\geq3\), \(\mu\geq n+2\) and  $p<\f{n}{n-2}$, the global
small solution $u$ is obtained if $p>p_f(n)$ (see \cite{DA} and \cite{Rei1});

${\bf \bullet}$
when $n=3$ and $\mu=2$, it is proved in \cite{Rei1} that the global small radial symmetric
solution $u$ exists if $p>p_s(n+\mu)$;

${\bf \bullet}$ when the odd $n\ge 5$ and $\mu=2$, the authors in \cite{Rei2}
obtain the global small radial symmetric $u$ if $p>p_{s}(n+\mu)$;

${\bf \bullet}$ when  \(n=3\) and \(\mu\in[\frac{3}{2},2)\),
the small radial symmetric solution $u$ exists globally if \(p_s(3+\mu)<p\leq2\)
(see \cite{LZ}).

In terms of \eqref{Sun-02} and the blowup results mentioned above, Open question (A) for $n=2$ can be stated as

{\bf Open question (B).} {\it For the 2-D problem
\begin{equation}\label{equ:eff1}
\left\{ \enspace
\begin{aligned}
&\partial_t^2 u-\Delta u +\f{\mu}{t}\,\p_tu=|u|^p, &&
(t,x)\in [1,\infty)\times \R^{2},\\
&u(1,x)=u_0(x), \quad \partial_{t} u(1,x)=u_1(x), &&x\in\R^2,
\end{aligned}
\right.
\end{equation}

{\bf (B1)} when $0<\mu<2$ and $p>p_s(2+\mu)$, there is a
global small solution $u$;

{\bf (B2)} when $\mu\ge 2$ and $p>2$, the
global small solution $u$ exists.
}

In our previous paper \cite{LWY}, when $p_{s}(2+\mu)<p<p_{conf}(2,\mu)=\f{\mu+5}{\mu+1}$
and $0<\mu<2$ but $\mu\not=1$, {\bf (B1)} has been proved.
We now show {\bf (B1)} for
$p\ge p_{conf}(2,\mu)$ and $0<\mu<2$ but $\mu\not=1$.
Here we emphasize that by the methods in \cite{LWY} and in the present paper,
when $n\ge 3$, $p>p_s(n+\mu)$ and $0<\mu<2$ but $\mu\not=1$, the global existence result
in Open question (A) can be analogously proved (even simpler due to the better time-decay rates
of solutions to the higher dimensional linear wave equations, also see \cite{HWY4}
and \cite{HSZ}).
However, due to $\bar\mu(n)>2$ for $n\ge 3$, it is unsolved yet in Open question (A)
for $n\ge 3$ and $2\le\mu<\bar\mu(n)$. On the other hand, {\bf (B2)} with $\mu=2$ has been solved in Theorem 2 of \cite{Rei1}.
In our forthcoming  papers \cite{HLWY-1} and \cite{LY},
{\bf (B1)} with $\mu=1$ and {\bf (B2)} with $\mu>2$ will be investigated by applying
the Bessel function tools and deriving some suitable time-decay estimates of weak solution $u$, respectively.
This means that the {\bf Open question (B)} can be solved systematically.

It is noticed that the conformal exponent $p_{conf}(2,\mu)=\f{\mu+5}{\mu+1}$ for problem \eqref{equ:eff1} is derived in
(1.29) and (1.30) of \cite{LWY}.
Our results are stated as follows.

\begin{theorem}\label{YH-1}
For problem \eqref{equ:eff1}, it holds that

(i) if $\mu\in(0,1)$ and \(p\geq p_{conf}(2,\mu)\),
then there exists a  constant $\ve_0>0$ such that a global weak solution $u\in L^r([1,\infty)\times\R^{2})$
exists as long as $\|u_0\|_{H^s}+\|u_1\|_{H^{s-|\mu-1|}}\le\ve_0$, where $s=1-\f{2}{p-1}$ and $r=\frac{3-\mu}{2((1-\mu)p+1+\mu)}\left(p^2-1\right)$.

(ii) if $\mu\in(1,\mu_0]$ with $\mu_0=\f{\sqrt{17}-1}{2}<2$ and \(p\geq p_{conf}(2,\mu)\),
then there exists a constant $\ve_0>0$ such that  the weak solution
$u\in L^r([1,\infty)\times\R^{2})$ exists globally when $\|u_0\|_{H^s}+\|u_1\|_{H^{s-|\mu-1|}}\le\ve_0$,
where $s=1-\frac{3-\mu}{1+\mu}\cdot \f{2}{p-1}$ and $r=\frac{(\mu+1)^2}{2(\mu^2-1)(p-1)+4(3-\mu)}
\left(p^2-1\right)$.

(iii) if $\mu\in(\mu_0, 2)$,

(iii-a) for \(p_{conf}(2,\mu)\leq p<\frac{3\mu-1}{3-\mu}\), there exists a   constant $\varepsilon_0>0$ such that as long as
$\|u_0\|_{W^{\f{\mu+1}{2}+\delta,1}(\mathbb{R}^2)}$ $+\|u_1\|_{W^{\f{3-\mu}{2}+\delta,1}(\mathbb{R}^2)}$ $\le\ve_0$,
there is a global weak solution $u$ with
$$
\left( 1+\left|{\psi_{\mu}^2(t)}-|x|^2 \right|\right)^{\gamma}u\in L^{p+1}([1,+\infty) \times \mathbb{R}^2),
$$
where $\psi_{\mu}(t)=(\mu-1)t^{\f{1}{(\mu-1)}}$,
$0<\delta<\f{3-\mu}{2}-\f{1}{p+1}-\gamma$ and
the constant $\gamma$ satisfies
\begin{align}\label{Sun-1}
\f{1}{p(p+1)}-\f{\mu^2+\mu-4}{(\mu+1)p}<\gamma<\f{(3-\mu)p-3(\mu-1)}{2(p+1)}.
\end{align}

(iii-b) for \(p\geq\frac{3\mu-1}{3-\mu}\), there exists a constant $\ve_0>0$ such that
\eqref{equ:eff1} has a global weak solution $u\in L^r([1,\infty)\times\R^{2})$ when
$\|u_0\|_{H^s}+\|u_1\|_{H^{s-|\mu-1|}}\le\ve_0$, where $s=1 -\f{2(\mu-1)}{p-1}$ and $r=\frac{\mu+1}{2(\mu-1)}\left(p-1\right)$.
\end{theorem}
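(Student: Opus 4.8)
The plan is to handle the four cases by a single contraction-mapping scheme whose only case-dependent ingredient is a linear space--time estimate, and to isolate the genuine difficulty in case (iii-a). First I would strip off the scale-invariant damping by the substitution $u=t^{-\mu/2}w$, which turns \eqref{equ:eff1} into the wave equation with an inverse-square time-potential
\begin{equation*}
\partial_t^2 w-\Delta w+\f{\mu(2-\mu)}{4t^2}\,w=t^{\f{\mu(1-p)}{2}}|w|^p.
\end{equation*}
Taking the Fourier transform in $x$ reduces the homogeneous linear part to the Bessel equation $\partial_t^2\hat w+(|\xi|^2+\f{\mu(2-\mu)}{4t^2})\hat w=0$, whose two independent solutions are $\sqrt{t}\,J_{\pm\nu}(t|\xi|)$ with order $\nu=\f{|\mu-1|}{2}$. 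Hence the free evolution and the Duhamel operator are Fourier integral operators with Bessel amplitudes, and the mismatch between the two admissible data spaces $H^s$ and $H^{s-|\mu-1|}$ is exactly the homogeneity gap $2\nu=|\mu-1|$ between $J_\nu$ and $J_{-\nu}$.

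For the cases (i), (ii) and (iii-b) I would then prove the unweighted estimate that the linear evolution of $(u_0,u_1)\in H^s\times H^{s-|\mu-1|}$ lies in $L^r([1,\infty)\times\R^2)$, and that the Duhamel operator gains the time-weight $t^{\f{\mu(1-p)}{2}}$ supplied by the reduction above. This rests on the large-argument asymptotics of the Bessel amplitudes together with stationary-phase bounds for the resulting oscillatory integrals, which give the dispersive decay; a $TT^*$ argument followed by Hardy--Littlewood--Sobolev then yields the space--time $L^r$ bound, the exact rational value of $r=r(p,\mu)$ being forced by matching the scaling of the kernel with the regularity $s$. With this estimate the map $u\mapsto(\text{linear data evolution})+(\text{Duhamel of }|u|^p)$ is shown to be a contraction on a small ball in $L^r$: one bounds $\||u|^p\|$ in the dual Strichartz space by $\|u\|_{L^r}^p$ through H\"older's inequality, and the particular $(s,r)$ listed in the statement are precisely those for which this H\"older balance is compatible with the admissible exponents when $p\ge p_{conf}(2,\mu)$. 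The threshold $p=p_{conf}(2,\mu)=\f{\mu+5}{\mu+1}$ is the value at and above which the scaling-critical $L^r$ estimate governs cases (i), (ii) and (iii-b); for $p<p_{conf}(2,\mu)$ the H\"older balance fails and one must return to the weighted estimates of \cite{LWY}.

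The hard part will be case (iii-a), $\mu\in(\mu_0,2)$ and $p_{conf}(2,\mu)\le p<\f{3\mu-1}{3-\mu}$, where the plain $L^r$ estimate degenerates and must be replaced by a weighted estimate adapted to the characteristic surface $\{|x|=\psi_\mu(t)\}$, $\psi_\mu(t)=(\mu-1)t^{\f{1}{\mu-1}}$, that governs the propagation in this regime (equivalently, the light cone of the associated generalized Tricomi form of the equation). Here I would establish a weighted estimate controlling $(1+\left|\psi_\mu^2(t)-|x|^2\right|)^{\gamma}u$ in $L^{p+1}$, in the spirit of the Georgiev--Lindblad--Sogge weighted inequalities for the Strauss problem, by measuring the oscillatory kernel against the conformal distance to the cone; this explains why the data in (iii-a) are taken in the $L^1$-based spaces $W^{\f{\mu+1}{2}+\delta,1}$ and $W^{\f{3-\mu}{2}+\delta,1}$, reflecting the use of fixed-time dispersive rather than $TT^*$ estimates in the weighted regime. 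The nonlinear closure then confronts two competing demands on $\gamma$: the weight must be strong enough to absorb $|u|^p$ via H\"older's inequality, and weak enough to be reproduced by the weighted linear estimate; these are encoded exactly in the lower and upper bounds of \eqref{Sun-1}, while the admissible range $0<\delta<\f{3-\mu}{2}-\f{1}{p+1}-\gamma$ matches the data regularity to the weight. The crux is to verify that the $\gamma$-interval in \eqref{Sun-1} is nonempty precisely on $\mu\in(\mu_0,2)$ with $p<\f{3\mu-1}{3-\mu}$ and that the weighted estimate then closes the fixed point; outside this window the nonempty-interval condition fails, which is why the complementary range $p\ge\f{3\mu-1}{3-\mu}$ is treated by the unweighted argument of (iii-b).
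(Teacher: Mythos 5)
Your route is genuinely different from the paper's, and in its present form it has gaps that matter. The paper never works with the inverse-square time-potential form $\p_t^2w-\Delta w+\f{\mu(2-\mu)}{4t^2}w=t^{\mu(1-p)/2}|w|^p$ and Bessel amplitudes; instead it reduces $\mu\in(1,2)$ to $\t\mu=2-\mu\in(0,1)$ via $v=t^{\mu-1}u$, then applies the Liouville change of variables $\mu=\f{k}{k+1}$, $T=t^{k+1}/(k+1)$, to land on the generalized Tricomi equation $\p_T^2u-T^{2k}\Delta u=T^{\al}|u|^p$, and the entire case structure of Theorem \ref{YH-1} is read off from the sign and size of the resulting $\al$: Theorem \ref{thm1.2} (time-weighted $L^2$-based Strichartz estimates with weight $t^{\nu}$, $\nu=\f{\al}{p+1}\le\f{m}{4}$) when $\al>0$, and Theorem \ref{th-L} (the cone-weighted estimates of \cite{LWY}) when $\al<0$. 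Your sketch does not engage with this bookkeeping, and consequently cannot explain the two thresholds that organize the statement. First, the split between (ii) and (iii) at $\mu_0=\f{\sqrt{17}-1}{2}$ is exactly the sign change of $\t\al(m)=\f{2(2-\mu)}{\mu-1}-\f{4}{\mu+1}$, i.e.\ the root of $\mu^2+\mu-4=0$; treating (i), (ii) and (iii-b) by one and the same ``unweighted $L^r$ estimate'' cannot produce three different pairs $(s,r)$, which in the paper come from three different substitutions of $(m,\al,\nu)$ into Theorems \ref{thm1.2}--\ref{th-L}. Second, your stated reason for the ceiling $p<\f{3\mu-1}{3-\mu}$ in (iii-a) --- that the $\gamma$-interval \eqref{Sun-1} becomes empty beyond it --- is false: the paper notes that the interval is nonempty for every $p\ge p_{conf}(2,\mu)$ (e.g.\ $\mu=1.9$, $p=5$ gives roughly $(-0.07,\,0.23)$). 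The true origin of the ceiling is the admissibility constraint $q=p+1\le\f{2(m+3)}{m+1}=1+p_{conf}(2,m,0)$ in the inhomogeneous weighted estimate (Lemma \ref{lem3.4-SS} with $\al=0$), which translates precisely to $p\le\f{3\mu-1}{3-\mu}$ via \eqref{c11}.

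Beyond the bookkeeping, the analytic core of your plan is underspecified exactly where the real work lies. The branch $\sqrt{t}\,J_{-\nu}(t|\xi|)$ blows up like $(t|\xi|)^{-\nu}$ as $t|\xi|\to0$, so ``large-argument asymptotics plus stationary phase'' does not give frequency-uniform kernel bounds; the paper's FIO amplitude estimates \eqref{equ:3.16}--\eqref{equ:3.16'} encode precisely the uniform small- and large-argument behaviour needed before the $TT^*$/Hardy--Littlewood--Sobolev machinery can run, and the genuinely new ingredient is the homogeneous bound with the extra decaying weight $t^{\nu}$ (Lemma \ref{lem3.3}), whose proof must be split into the regimes $0<\nu\le\f{m}{m+6}$ and $\f{m}{m+6}<\nu\le\f{m}{4}$. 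Your reduction trades the growing factor $T^{\al}$ for a decaying prefactor, but only by moving the difficulty into the unperturbed operator; without a concrete substitute for Lemma \ref{lem3.3} and for the weighted estimates \eqref{a-2-S} and \eqref{a-6-SS} of \cite{LWY}, the contraction arguments cannot be closed and the specific exponents $s$, $r$, $\gamma$, $\delta$ in the statement remain unaccounted for.
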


\begin{remark}\label{YHC-1S}
{\it The number $\mu_0=\f{\sqrt{17}-1}{2}$ in Theorem \ref{YH-1} (ii)  comes from the positive root of the quadratic algebraic equation
$\mu^2+\mu-4=0$. The equation $\mu^2+\mu-4=0$ is derived from the index restrictions in Theorem \ref{thm1.2}
below (see the derivation of \eqref{equ:mu-2}),
where the global existence of small solution to the generalized semilinear Tricomi equation $\p_t^2u-t^m\Delta u=t^{\al}|u|^p$ is shown
for suitable scope of $p$.}
\end{remark}

\begin{remark}\label{YHC-1}
{\it It follows from $p\geq p_{conf}(2, \mu)$ and direct computation that
$\f{1}{p(p+1)}-\f{\mu^2+\mu-4}{(\mu+1)p}<\f{(3-\mu)p-3(\mu-1)}{2(p+1)}$ holds
and then the condition
\eqref{Sun-1} makes sense.
In addition, by $\gamma<\f{(3-\mu)p-3(\mu-1)}{2(p+1)}$ in \eqref{Sun-1} and $\mu_0<\mu<2$, one obtains  $\gamma<\f{3-\mu}{2}-\f{\mu}{p+1}<\f{3-\mu}{2}-\f{1}{p+1}$. This implies
the choice of $\delta>0$ in Theorem \ref{YH-1}(iii-a) can be realized.}
\end{remark}

\begin{remark}\label{YHC-2S}
{\it When $\mu\in (\mu_0, 2)$ in Theorem \ref{YH-1} (iii), the time-weighted Strichartz inequalities in Lemma \ref{lem3.3},
Lemma \ref{lem3.4} and Lemma \ref{lem3.5}
do not work for deriving the global existence of problem \eqref{equ:eff1}.
Thanks to other types of spacetime-weighted Strichartz inequalities established in Lemma 3.1 and Theorem 5.1
of \cite{LWY}, Theorem \ref{YH-1} (iii) will be shown by choosing
suitable Strichartz indices, one can see the details in  \eqref{c11}-\eqref{c13} below.}
\end{remark}

\begin{remark}\label{JY-2}
{\it For $\mu=2$, set $v=tu$, then the $n-$dimensional equation in \eqref{equ:eff} with $\beta=1$
can be changed into the undamped wave equation $\p_t^2v-\Delta v=t^{1-p}|v|^p$.
In this case, there have been some results about global existence for the small data solutions of \eqref{equ:eff1}, see
\cite{DL}-\cite{Rei1}, \cite{Kato} and \cite{P}. For examples, when \(1\leq n\leq3\), the critical indices have been
determined as \(\max\{p_f(n), p_s(n+2)\}\);
when \(n\geq4\) and \(n\) is even, the global existence is established for \(p_s(n+2)<p<p_f(\frac{n+1}{2})\)
provided that the solution is radial symmetric; the global existence results also hold for \(n\geq5\) when \(n\) is odd,
\(p_s(n+2)<p<\min\{2,\frac{n+1}{n-3}\}\) and the solution is radial symmetric.}
\end{remark}

\begin{remark}\label{JY-3}
{\it As illustrated in Remark 1.3 of \cite{LWY} that for $\mu=1$ and by $v=t^{\f12}u$, the equation in \eqref{equ:eff1}
becomes $\p_t^2v-\Delta v+\f{1}{4t^2}v=t^{\f{1-p}{2}}|v|^p$. To our best knowledge, so far there are no systematic results
on the global existence of $v$ when $p>p_{crit}(n,1)$.}
\end{remark}

\begin{remark}\label{JY-4-0}
{\it For the 2-D semilinear Euler-Poisson-Darboux equation $\p_t^2u-\Delta u+\f{\mu_1}{t}\p_tu+\f{\mu_2}{t^2}u=|u|^p$
with $\mu_1>0$, $\mu_2\ge0$, $p>1$, $(\mu_1-1)^2-4\mu_2\ge 0$ and $t\ge 1$, by the transformation $u=t^{\f{\sqrt{(\mu_1-1)^2-4\mu_2}-\mu_1+1}{2}}v$,
one has $\p_t^2v-\Delta v+\f{2+\sqrt{(\mu_1-1)^2-4\mu_2}-\mu_1}{t}\p_tv=t^{\f{p-1}{2}(\sqrt{(\mu_1-1)^2-4\mu_2}-\mu_1+1)}|v|^p$.
Under the condition of $2+\sqrt{(\mu_1-1)^2-4\mu_2}-\mu_1>0$, the corresponding results on semilinear generalized Tricomi equation
$\p_t^2w-t^m\Delta w=t^{\al}|w|^p$ ($m>0, \al\in\Bbb R$) in \cite{LWY} and the present paper can be applied to the global existence of small
weak solution $u$.}
\end{remark}

Next we transform the equation in \eqref{equ:eff1} into the generalized Tricomi equation.
When  $0<\mu<1$, as in \cite{Rei1}, by setting $\mu=\f{k}{k+1}$ with $k\in (0,\infty)$
and $T=t^{k+1}/(k+1)$, the equation in \eqref{equ:eff1} for $t\ge 1$ is essentially equivalent to
\begin{equation}\label{YC-1}
\partial_T^2 u-T^{2k}\Delta u=T^{2k} |u|^p;
\end{equation}
when $1<\mu<2$, by setting $v(t,x)=t^{\mu-1}u(t,x)$,
the equation in \eqref{equ:eff1} can be rewritten as
\begin{equation}\label{equ:shift1-0}
	\partial_t^2 v-\Delta v +\f{\t\mu}{t}\,\p_tv=t^{(p-1)(\t\mu-1)}|v|^p,
\end{equation}
where $\t\mu=2-\mu\in (0,1)$, and the unimportant constant coefficients $C_{\mu}>0$ before the nonlinearities in
\eqref{YC-1} and \eqref{equ:shift1-0} are neglected. Let $\t\mu=\f{\t k}{\t k+1}$ with $\t k\in (0,\infty)$
and $T=t^{\t k+1}/(\t k+1)$. Then for $t\ge 1$, \eqref{equ:shift1-0} is actually equivalent to
\begin{equation}\label{YC-2}
\partial_T^2 u-T^{2\t k}\Delta u=T^{\al} |u|^p,
\end{equation}
where $\al=2\t k+1-p$, and the unimportant constant coefficient $C_{\t k}>0$ before the nonlinearity
of \eqref{YC-2} is also neglected.

Based on \eqref{YC-1} and \eqref{YC-2}, in order to prove Theorems \ref{YH-1},
it is necessary to study the following semilinear generalized Tricomi equation for $t\ge 1$
\begin{align}\label{YH-3}
\partial_t^2 u-t^m \Delta u =t^\alpha|u|^p,
\end{align}
where $m>0$ and $\al\in\Bbb R$.

Note that  about the local existence and optimal regularity of solution~$u$ to
\eqref{YH-3} with $\al=0$ and $m\in\Bbb N$ under the weak regularity assumptions of initial data
$(u,\p_tu)(0,x)=(u_0,
u_1)$, the reader may consult \cite{Rua1,Rua2,Rua3,Rua4,Yag2,Yag3}. In addition,
when $\al=0$ and $m>0$,
for the initial data problem of \eqref{YH-3} starting from some positive time $t_0$,
it has been shown that there exists a critical index \(p_{\crit}(n,m)>1\) such that
when $p>p_{\crit}(n,m)$, the small data solution $u$ of \eqref{YH-3} exists globally;
when $1<p\le p_{\crit}(n,m)$, the solution $u$ may blow up in finite time,
where \(p_{\crit}(n,m)\) for \(n\geq2\) and \(m>0\) is the positive root of
\begin{equation}\label{equ:p2}
\Big((m+2)\frac{n}{2}-1\Big)p^2+\Big((m+2)(1-\frac{n}{2})-3\Big)p
-(m+2)=0,
\end{equation}
while for \(n=1\), \(p_{\crit}(1,m)=1+\frac{4}{m} \) (see
\cite{HWY1,HWY2,HWY3,HWY4,HWY5} and \cite{Gal}).
On the other hand, under the assumptions
\begin{equation}\label{equ:ugly}
\left\{ \enspace
\begin{aligned}
\f{(n+1)(p-1)}{p+1}&\le\f{m}{m+2},\\
\left(\f{\alpha+ 2}{p-1}-\f{n(m+2)}{2(p+1)}\right)p&< 1,\\
\f{2(p+\alpha+1)}{p(p-1)n(m+2)}\le\f{1}{p+1}&\le\f{m+4}{(n+1)(p-1)(m+2)}
\end{aligned}
\right.
\end{equation}
(corresponding to (1.8) and (1.12) of \cite{Yag2} with $\al=p-1$, $k=\f{m}{2}$, $q=p+1$ and
$\beta=\f{\alpha+ 2}{p-1}-\f{n(m+2)}{2(p+1)}$), it is shown in \cite[Theorem~1.2]{Yag2}
that problem \eqref{YH-3} has a global small data solution
$u\in C([0, \infty), L^{p+1}(\R^n)) \cap C^1([0, \infty), {\mathcal D}'(\R^n))$.
Meanwhile, under the conditions of $\int_{\R^n} u_1(x)dx>0$ and
\begin{equation}\label{equ:blow}
1<p<\f{(m+2)n+2}{(m+2)n-2},
\end{equation}
it is proved in \cite[Theorem~1.3]{Yag2} that problem \eqref{YH-3}
has no global solution $u\in C([0, \infty), L^{p+1}(\R^n))$.
We point out that \eqref{equ:blow} comes from condition (1.15) of
  \cite{Yag2}.  However, it seems that the assumptions in \eqref{equ:ugly} are valid only for \(\frac{m}{\alpha}\gg1\).
  Indeed, taking \(\alpha=m\) in \eqref{equ:ugly} yields
  \begin{equation}\label{equ:ugly1}
  \left\{ \enspace
\begin{aligned}
	p\leq1+\frac{2m}{(m+2)n+2}, \\
	\left((m+2)(n-2)+2\right)p^2-(m+2)(n+2)p-2\geq0, \\
	\left((m+2)n-2\right)p^2-(m+2)(n+2)p-(2m+2)\geq0.
\end{aligned}
\right.	
  \end{equation}
Denote by $\t p=1+\frac{2m}{(m+2)n+2}$ and $D(p)=\left((m+2)n-2\right)p^2-(m+2)(n+2)p-(2m+2)$.
It is easy to know $D(1)<0$, $D(\t p)<0$ and then $D(p)<0$ holds for $p\in [1, \t p]$.
This is contradictory with the third inequality $D(p)\ge 0$ in \eqref{equ:ugly1},
which implies that the admissible range of \(p\) is an empty set.

We now focus on the global existence of the solution to the following problem
\begin{equation}\label{YH-4}
\left\{ \enspace
\begin{aligned}
&\partial_t^2 u-t^{m} \Delta u=t^\alpha|u|^p, &&
(t,x)\in [1,\infty)\times \R^{2},\\
&u(1,x)=u_0(x), \quad \partial_{t} u(1,x)=u_1(x), &&x\in\R^2,
\end{aligned}
\right.
\end{equation}
where $m>0$, $\alpha\in\Bbb R$, $p>1$, $u_0(x), u_1(x)\in C_0^{\infty}(\Bbb R^2)$ and
supp $u_0$, supp $u_1\in B(0,1)$.
It has been shown in Theorem 1 of \cite{PR} that there exists a critical exponent $p_{crit}(2,m,\alpha)>1$ for $\al>-2$
such that when $1<p\le p_{crit}(2,m,\alpha)$, the solution of \eqref{YH-4} can blow up in finite time with some suitable
choices of $(u_0,u_1)$, where
\begin{equation}\label{YH-5}
p_{crit}(2,m,\alpha)=\max \left\{p_1(2,m,\alpha), p_2(2,m,\alpha)\right\}
\end{equation}
with the Fujita-type index $p_1(2,m,\alpha)=\frac{m+\alpha+3}{m+1}$ and
the Strauss-type index $p_2(2,m,\alpha)$ being the positive root of the quadratic equation:
\begin{equation}\label{YH-7}
(m+1)p^2-(3+2\alpha)p-(m+2)=0.
\end{equation}
It is not difficult to verify that when $\al>-1$, $p_{crit}(2,m,\alpha)=p_2(2,m,\alpha)$
holds; when $-2<\al\le -1$, $p_{crit}(2,m,\alpha)=p_1(2,m,\alpha)$ holds.

In addition, the conformal exponent $p_{conf}(2,m,\alpha)$ for the equation in \eqref{YH-4} is determined as
(see Subsection 1.4 of \cite{LWY})
\begin{equation}\label{equ:conf}
p_{\conf}(2,m,\alpha)=\frac{m+2\alpha+5}{m+1}.
\end{equation}

\begin{theorem}[Global existence for \(\alpha>0\)]\label{thm1.2}
Assume \(m\in(0,+\infty)\) and \(\alpha>0\). If
$p\ge \max\{p_{\conf}(2,m,\alpha),$ $\frac{4\alpha}{m}-1\}$,
then there exists a constant $\ve_0>0$ such that problem
\eqref{YH-4}  admits a global weak solution $u\in
L^r([1,\infty)\times \R^{2})$ as
$\|u_0\|_{H^s}+\|u_1\|_{H^{s-\f{2}{m+2}}}\le\ve_0$, where $
s=1-\f{2(\alpha+2)}{(m+2)(p-1)}$ and $
r=\frac{m+3}{2(\nu+1)}\left(p-1\right)$ with \(\nu=\frac{\alpha}{p+1}\).
\end{theorem}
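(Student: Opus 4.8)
The plan is to prove global existence for \eqref{YH-4} with $\alpha>0$ by a contraction-mapping argument in the space $L^r([1,\infty)\times\R^2)$, where the linear solution operator and the Duhamel operator are controlled by time-weighted Strichartz estimates for the generalized Tricomi equation. First I would write the solution in Duhamel form $u=u^{\lin}+L[|u|^p]$, where $u^{\lin}$ solves the homogeneous equation $\p_t^2 u-t^m\Delta u=0$ with data $(u_0,u_1)$ at $t=1$, and $L[F](t,x)=\int_1^t E(t,\tau)F(\tau,\cdot)\,d\tau$ is the forward fundamental solution applied to the source. The exponents $s=1-\f{2(\alpha+2)}{(m+2)(p-1)}$ and $r=\frac{m+3}{2(\nu+1)}(p-1)$ with $\nu=\frac{\alpha}{p+1}$ are dictated by scaling: they must be chosen so that the Strichartz estimate for the linear Tricomi operator is \emph{scale-invariant} under the symmetry $(t,x)\mapsto(\la^{2/(m+2)}t,\la x)$ of the equation, and so that the nonlinear term $t^\alpha|u|^p$ maps $L^r$ back into the predual of the $L^r$ norm after applying $L$. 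The two hypotheses $p\ge p_{\conf}(2,m,\alpha)$ and $p\ge\frac{4\alpha}{m}-1$ are exactly the admissibility constraints that make the relevant Strichartz indices lie in the allowed range.

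The key steps, in order, are as follows. (1) Establish the linear time-weighted Strichartz estimate $\|u^{\lin}\|_{L^r([1,\infty)\times\R^2)}\lesssim \|u_0\|_{H^s}+\|u_1\|_{H^{s-\frac{2}{m+2}}}$; this is the dispersive input, and it is where the representation of the Tricomi fundamental solution as a Fourier integral operator and the associated decay estimates enter. (2) Establish the inhomogeneous (dual) estimate controlling $\|L[F]\|_{L^r}$ by a suitable weighted norm of $F$, again by a $TT^*$/duality argument applied to the same Fourier integral operator. (3) Combine the two with the pointwise bound $\||u|^p\|\le |u|^p$ and H\"older in $(t,x)$ to close the estimate $\| t^\alpha|u|^p\|_{(\text{dual norm})}\lesssim \|u\|_{L^r}^p$, verifying that the time weight $t^\alpha$ is absorbed precisely because of the choice $\nu=\frac{\alpha}{p+1}$. (4) Run the standard fixed-point argument on the ball $\{\|u\|_{L^r}\le 2C(\|u_0\|_{H^s}+\|u_1\|_{H^{s-2/(m+2)}})\}$, using smallness $\ve_0$ to obtain a contraction via the Lipschitz bound $\big||u|^p-|v|^p\big|\lesssim(|u|^{p-1}+|v|^{p-1})|u-v|$.

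The main obstacle will be Step (2), the inhomogeneous weighted Strichartz estimate with the correct power weight $t^\alpha$ matched to the Strichartz index $r$. Unlike the constant-coefficient wave equation, the Tricomi operator has a $t$-dependent propagation speed $t^{m/2}$, so the fundamental solution does not have a clean translation-invariant kernel; one must track how the time weights interact with the oscillatory phase and the degenerating characteristic speed near $t=1$ versus $t\to\infty$. I expect this to require carefully chosen admissible exponent pairs satisfying all three inequalities in \eqref{equ:ugly}-type constraints, and the condition $p\ge\frac{4\alpha}{m}-1$ is precisely what guarantees the weighted dual exponent stays admissible. Once the two linear estimates are in hand, the nonlinear closure and contraction are routine, so the entire difficulty is concentrated in proving the sharp time-weighted Strichartz inequalities for the Fourier integral operator representation of the linear Tricomi evolution.
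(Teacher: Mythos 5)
Your overall strategy coincides with the paper's: a homogeneous time-weighted Strichartz estimate (Lemma \ref{lem3.3}), an inhomogeneous one (Lemmas \ref{lem3.4}--\ref{lem3.5}), and a Picard iteration closed by smallness, with $\nu=\frac{\alpha}{p+1}$ chosen to absorb $t^{\alpha}$ and the constraint $0<\nu\le\frac m4$ in those lemmas being exactly the source of the hypothesis $p\ge\frac{4\alpha}{m}-1$. However, your Step (3) as written has a genuine gap. A scaling check shows that the naive H\"older closure $\|t^{\alpha-\nu}|u|^p\|_{L^{p_0}}=\|t^{\nu}u\|_{L^{pp_0}}^{p}\le\|t^{\nu}u\|_{L^{r}}^{p}$ requires $r=pp_0$, and this identity holds \emph{only} at $p=p_{\conf}(2,m,\alpha)$; for $p>p_{\conf}(2,m,\alpha)$ one has $r>pp_0$ and the estimate does not close with the underived dual norm. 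This is precisely why the paper's inhomogeneous estimate \eqref{equ:3.34} carries a fractional derivative $|D_x|^{\gamma-\frac{1}{m+2}}$ with $\gamma=1-\frac{2\nu}{m+2}-\frac{2(m+3)}{r(m+2)}>\frac{1}{m+2}$ on the source term, so that the gain in integrability from $pp_0$ up to $r$ is paid for by a genuine positive-order derivative of the nonlinearity.

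Concretely, what is missing from your scheme is: (a) the iteration norm must be augmented to $M_k=\|t^{\nu}u_k\|_{L^{r}}+\|t^{\nu}|D_x|^{\gamma-\frac{1}{m+2}}u_k\|_{L^{q_0}}$ as in \eqref{equ:4.2}, the second piece being controlled via Lemma \ref{lem3.5}; (b) the fractional Leibniz/chain rule of Christ and Christ--Kiselev, \eqref{equ:4.5}, is needed to distribute $|D_x|^{\gamma-\frac{1}{m+2}}$ over $|u|^p$ (this also forces $0\le\gamma-\frac{1}{m+2}\le1$, which the paper verifies from $p\ge\max\{p_{\conf},\frac{4\alpha}{m}-1\}$); and (c) since $|u|^p-|v|^p$ does not interact well with fractional derivatives, the contraction for differences must be run in the weaker underived norm $N_k=\|t^{\nu}(u_k-u_{k-1})\|_{L^{q_0}}$ (where $\gamma=\frac{1}{m+2}$ and no derivative appears), with convergence of $u_k$ obtained only in $L^{q_0}_{loc}$ and membership of the limit in $L^{r}$ recovered by Fatou. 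Without these three ingredients your fixed-point argument in the single ball $\{\|u\|_{L^r}\le 2C\ve_0\}$ cannot be closed on the full range $p\ge p_{\conf}(2,m,\alpha)$ claimed by the theorem.
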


\begin{theorem}[Global existence for \(\alpha<0\)]\label{th-L}
Let \(m\in(0,+\infty)\) and \(-2<\alpha<0\).

(i) For $p_{\conf}(2,m,\alpha)\leq p< p_{\conf}(2,m,0)$,
\eqref{YH-4} has a  global weak solution $u$ with
\begin{equation}\label{equ:1.3}
\left(1+\big|\phi_m^2(t)-|x|^2\big|\right)^{\gamma}u\in L^{p+1}([1,\infty)\times \mathbb{R}^2)
\end{equation}
when the initial data $(u_0, u_1)$ satisfies
$\parallel u_0\parallel_{W^{\f{m+3}{m+2}+\delta,1}(\mathbb{R}^2)}
+\parallel u_1\parallel_{W^{\f{m+1}{m+2}+\delta,1}(\mathbb{R}^2)}\leq\varepsilon_0,$
where $\phi_m(t)=\f{2}{m+2}t^{\f{m+2}{2}}$, $\varepsilon_0>0$ is a small constant,
\(0<\delta<\frac{m+1}{m+2}-\gamma-\frac{1}{p+1}\),
and the positive constant $\gamma$ satisfies
\begin{equation}\label{con-1}
\f{1}{p(p+1)}+\f{\al}{(m+2)p}<\gamma<\f{m+1}{m+2}-\f{m+4}{(m+2)(p+1)}.
\end{equation}

(ii) For \(p\ge p_{\conf}(2,m,0)\), there exists a constant $\ve_0>0$ such that
\eqref{YH-4} admits a global weak solution $u\in
L^r([1,\infty)\times\Bbb R^2)$ whenever
$\|u_0\|_{H^s}+\|u_1\|_{H^{s-\f{2}{m+2}}}\le\ve_0$, where $
s=1-\f{4}{(m+2)(p-1)}$ and $
r=\frac{m+3}{2}(p-1)$.
\end{theorem}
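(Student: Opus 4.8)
The plan is to construct the solution of \eqref{YH-4} by a Picard iteration built on Duhamel's formula and the Strichartz-type estimates for the generalized Tricomi operator $\p_t^2-t^m\Delta$ developed in \cite{LWY}. Writing $u=v+\mathcal L(t^\al|u|^p)$, where $v$ solves the homogeneous equation with data $(u_0,u_1)$ and $\mathcal L$ is the inhomogeneous solution operator with zero data at $t=1$, the map $\Phi(u)=v+\mathcal L(t^\al|u|^p)$ must be shown to preserve and contract a small ball in a suitable spacetime norm. Since $-2<\al<0$, the forcing carries the extra decaying factor $t^\al\le1$ on $t\ge1$; the split between (i) and (ii) reflects only whether the power $p$ lies below or above the $\al=0$ conformal threshold $p_{\conf}(2,m,0)=\f{m+5}{m+1}$, which determines which Strichartz norm is strong enough to absorb the nonlinear loss.

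For part (ii) the observation is that the indices $s=1-\f{4}{(m+2)(p-1)}$ and $r=\f{m+3}{2}(p-1)$ coincide exactly with those of Theorem \ref{thm1.2} specialized to $\al=0$ (where $\nu=0$), and the admissible range $p\ge p_{\conf}(2,m,0)$ likewise matches, since $\max\{p_{\conf}(2,m,0),\tfrac{4\cdot 0}{m}-1\}=p_{\conf}(2,m,0)$. Hence I would run the iteration in $X=L^r([1,\infty)\times\R^2)$ exactly as in Theorem \ref{thm1.2}, the only change being the replacement of the forcing $t^0|u|^p$ by $t^\al|u|^p$. Because $0<t^\al\le1$ for $t\ge1$ when $\al<0$, every bound on $\|t^\al|u|^p\|$ in the dual Strichartz norm is dominated termwise by the corresponding $\al=0$ bound, so the homogeneous estimate $\|v\|_X\lesssim\|u_0\|_{H^s}+\|u_1\|_{H^{s-\f{2}{m+2}}}$ together with the inhomogeneous estimate and Hölder's inequality make $\Phi$ a contraction on a small ball of $X$, producing the global solution $u\in L^r$.

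For part (i), where $p_{\conf}(2,m,\al)\le p<p_{\conf}(2,m,0)$, the plain $L^r$ estimate no longer closes, and the key is to iterate in the spacetime-weighted space
\[
Y=\Big\{u:\ \big(1+|\phi_m^2(t)-|x|^2|\big)^{\g}u\in L^{p+1}([1,\infty)\times\R^2)\Big\},
\]
using the weighted Strichartz inequalities of Lemma 3.1 and Theorem 5.1 of \cite{LWY}, whose weight $W:=1+|\phi_m^2(t)-|x|^2|$ measures distance to the characteristic cone $|x|=\phi_m(t)$ of the Tricomi operator. The homogeneous part is controlled through the $W^{\f{m+3}{m+2}+\dl,1}\times W^{\f{m+1}{m+2}+\dl,1}$ data norms, the fractional $\dl$-loss being precisely what the range $0<\dl<\f{m+1}{m+2}-\g-\f1{p+1}$ tolerates; note the upper bound on $\g$ in \eqref{con-1} guarantees $\f{m+1}{m+2}-\g-\f1{p+1}>0$, so such $\dl$ exist. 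For the nonlinearity I would invoke the weighted inhomogeneous estimate of \cite{LWY}, which relates $\|W^{\g}\mathcal L(F)\|_{L^{p+1}}$ to a weighted $L^{(p+1)'}$ norm of $F$ carrying a conformal gain; substituting $F=t^\al|u|^p$ and using Hölder to factor $|u|^p=(W^{\g}u)^p\cdot W^{-p\g}$, one reduces to
\[
\big\|W^{\g}\mathcal L(t^\al|u|^p)\big\|_{L^{p+1}}\lesssim \big\|W^{\g}u\big\|_{L^{p+1}}^{\,p},
\]
provided the residual spacetime integral of the surplus weight $W^{\kappa}t^\al$ converges. This convergence is exactly the content of the two-sided bound \eqref{con-1}: the lower bound $\g>\f{1}{p(p+1)}+\f{\al}{(m+2)p}$ secures integrability near the cone (the $\f{\al}{(m+2)p}$ term being the trace of $t^\al$ pushed through the anisotropic scaling $\phi_m(t)\sim t^{(m+2)/2}$, so that $\al<0$ only widens the interval), while the upper bound secures it away from the cone and is compatible with the homogeneous estimate. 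As in Remark \ref{YHC-1}, $p\ge p_{\conf}(2,m,\al)$ makes this $\g$-interval nonempty, so admissible $\g$ and then $\dl$ can be chosen and $\Phi$ contracts on a small ball of $Y$, yielding \eqref{equ:1.3}.

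The main obstacle is the nonlinear estimate in part (i): the bookkeeping that distributes the single weight $W^{\g}$ and the time factor $t^\al$ across the Hölder product so that the residual integral converges, while simultaneously matching the exponents appearing in \eqref{con-1} and in the $\dl$-condition to the admissible range of the weighted Strichartz inequality of \cite{LWY}. The genuine difficulty lies in reconciling these exponent windows with the degenerate geometry of the cone $|x|=\phi_m(t)$ and the anisotropic $t^{(m+2)/2}$ scaling of the Tricomi flow, rather than in the contraction mechanism itself. By contrast, part (ii) is comparatively routine once the $\al=0$ conformal estimate of Theorem \ref{thm1.2} is available, since the factor $t^\al$ with $\al<0$ can only help.
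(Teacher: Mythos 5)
Your proposal is correct and follows essentially the same route as the paper: part (ii) is reduced to the $\alpha=0$ conformal case (Theorem 1.2 of \cite{HWY1}) using $t^{\alpha}\le 1$, and part (i) runs a Picard iteration in the cone-weighted $L^{p+1}$ space built on Lemma 3.1 and Theorem 5.1 of \cite{LWY}, absorbing $t^{\alpha}\sim\phi_m(t)^{2\alpha/(m+2)}\lesssim\big((\phi_m(t)+M)^2-|x|^2\big)^{\alpha/(m+2)}$ into the weight and distributing $W^{p\gamma}$ by H\"older with $q=p+1$, so that \eqref{con-1} is exactly the admissibility condition $\gamma_2=p\gamma-\frac{\alpha}{m+2}>\frac1q$ together with $\gamma<\frac{m+1}{m+2}-\frac{m+4}{(m+2)q}$. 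The only cosmetic difference is that what you describe as checking convergence of a ``residual'' weighted integral is, in the paper, packaged entirely inside the hypotheses of the weighted inhomogeneous estimate.
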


\begin{remark}\label{Sun-3S}
{\it By direct computation, the condition $p\ge \max\{p_{\conf}(2,m,\alpha),\frac{4\alpha}{m}-1\}$
in Theorem \ref{thm1.2} is equivalent to $p\ge p_{\conf}(2,m,\alpha)$ for $\alpha\leq m\cdot\frac{m+3}{m+2}$
and \(p\ge \frac{4\alpha}{m}-1\)
for \(\alpha>m\cdot\frac{m+3}{m+2}\).}
\end{remark}

\begin{remark}\label{W-Y}
{\it By $p\geq p_{conf}(2,m,\alpha)=1+\f{2(\al+2)}{m+1}$ with $m>0$,
one can easily verify  $\f{1}{p(p+1)}+\f{\al}{(m+2)p}<\f{m+1}{m+2}-\f{m+4}{(m+2)(p+1)}$.
Then the choice of  \(\gamma\) in \eqref{con-1}
and the choice of $\delta>0$ make sense in Theorem \ref{th-L}.}
\end{remark}

\begin{remark}\label{L-Y}
{\it For the 2-D semilinear generalized Tricomi equation $\p_t^2v-
t^m\Delta v=|v|^p$ with the initial data $(v,\p_tv)(1,x)=(v_0(x), v_1(x))\in C_0^{\infty}(\Bbb R^2)$
and $p\ge p_{\conf}(2,m,0)$, the global small solution $w\in L^r([1,\infty)\times\Bbb R^2)$
($r=\frac{m+3}{2}(p-1)$) is obtained in Theorem 1.2 of \cite{HWY1}. Therefore, for \(\alpha<0\),
due to the appearance of decay factor $t^{\al}$ before the nonlinearity $|u|^p$ in the equation of \eqref{YH-4},
Theorem \ref{th-L} (ii) can be more easily established by the completely analogous proof on Theorem 1.2 of \cite{HWY1}.}
\end{remark}

\begin{remark}\label{JY-0}
{\it Note that for $\al=0$ in \eqref{YH-4}, the global existence or blowup of small data solutions has been
systematically solved in \cite{HWY1}-\cite{HWY5}.}
\end{remark}

\begin{remark}\label{rem1.2}
{\it We point out that in order to apply the time-weighted Strichartz estimates derived in Section 2 for
obtaining the global solution of \eqref{YH-4} with $\al>0$, we need the restriction condition of \(\frac{\alpha}{p+1}\leq\frac{m}{4}\).
This implies
\begin{equation}\label{equ:beta}
p\geq\frac{4\alpha}{m}-1.
\end{equation}
When \(0<\alpha\leq\frac{m(m+3)}{m+2}\),
it is easy to know \(p_{\conf}(2,m,\al)\geq\frac{4\alpha}{m}-1\),
which means that the range of \(p\) in  Theorem \ref{thm1.2} contains the interval \([p_{\conf}(2,m,\al),+\infty)\).
While for \(\alpha>\frac{m(m+3)}{m+2}\), due to \(\frac{4\alpha}{m}-1>p_{\conf}(2,m,\al)\),
then the global existence of solution $u$ in  Theorem \ref{thm1.2} is derived only for \(p\geq\frac{4\alpha}{m}-1\).}
\end{remark}

\begin{remark}\label{JY-1}
{\it When $n\ge 3$, $m>0$ and $\alpha>-2$, for the problem
\begin{equation}\label{YH-4-SS}
\left\{ \enspace
\begin{aligned}
&\partial_t^2 u-t^{m} \Delta u=t^\alpha|u|^p, &&
(t,x)\in [1,\infty)\times \R^{n},\\
&u(1,x)=u_0(x), \quad \partial_{t} u(1,x)=u_1(x), &&x\in\R^n,
\end{aligned}
\right.
\end{equation}
the conformal exponent can be derived as in Subsection 1.4 of \cite{LWY}
\begin{equation*}\label{equ:conf-S}
p_{\conf}(n,m,\alpha)=\frac{(m+2)n+4\alpha+6}{(m+2)n-2}.
\end{equation*}
In addition, set
\begin{equation*}\label{equ:conf-1}
p_*(n,m,\alpha)=\frac{(m+2)(n-2)+4\alpha+ 6}{(m+2)(n-2)-2}>p_{\conf}(n,m,\alpha).
\end{equation*}
Note that such an analogous index $p_*(n,m,\alpha)$ with $\al=0$ has also appeared
in \cite{Rua4} in order to derive the related Strichartz inequalities under various restrictions on
the Strichartz index pairs.
By the analogous proof on Theorem \ref{th-L} together with Theorem 1.2 and Theorem 1.4 in \cite{HWY4}, we have

{\bf Conclusion 1:} For $m>0$ and $-2<\alpha<0$,

(i) when $p_{\conf}(n,m,\alpha)\leq p\leq p_{\conf}(n,m,0)$,
there is a global weak solution \(u\in L^{p+1}([1,\infty)\times\R^{n})\)
of \eqref{YH-4-SS} satisfying
\begin{equation*}\label{equ:1.3-S}
\left(1+\big|\phi_m^2(t)-|x|^2\big|\right)^{\gamma}u\in L^{p+1}([1,+\infty)\times \mathbb{R}^n)
\end{equation*}
when $\|u_0\|_{W^{\f{n}{2}+\f{1}{m+2}+\delta,1}(\mathbb{R}^n)}
+\|u_1\|_{W^{\f{n}{2}-\f{1}{m+2}+\delta,1}(\mathbb{R}^n)}\le\ve_0$,
where \(\ve_0>0\) is small, $\phi_m(t)=\frac{2}{m+2}t^{\frac{m+2}{2}}$,
$0<\delta<\f{n}{2}+\f{1}{m+2}-\gamma-\f{1}{p+1}$, and the positive constant $\gamma$ fulfills
\begin{equation*}\label{equ:1.4}
\frac{1}{p(p+1)}+\frac{\alpha }{(m+2)p}<\gamma
<\frac{\big((m+2)n-2\big)p-\big((m+2)n+2\big)}
{2(m+2)(p+1)}+\frac{m}{(m+2)(p+1)};
\end{equation*}

(ii) when $p_{\conf}(n,m,0)\le p\le p_*(n,m,0)$,
or when $p>p_*(n,m,0)$ but $p$ is an integer and the related nonlinearity $|u|^p$ is replaced by
$\pm\, u^p$, there exists a constant $\ve_0>0$ such that
\eqref{YH-4-SS} admits a global weak solution $u\in
L^r([1,\infty)\times\R^{n})$ when $\|u_0\|_{H^s}+\|u_1\|_{H^{s-\f{2}{m+2}}}\le\ve_0$, where $
s=\f{n}{2}-\f{4}{(m+2)(p-1)}$ and $
r=\frac{(m+2)n+2}{4}\left(p-1\right)$.

Similarly, the corresponding conclusions in Theorem \ref{thm1.2} for problem \eqref{YH-4-SS} are:

{\bf Conclusion 2:} For $m>0$ and $\alpha>0$,

(i) when \(n=3\),  $\max\{p_{\conf}(3,m,\alpha),\frac{4\alpha}{m}-1\}\leq p\leq p_*(3,m,\alpha)$,
or \(p>p_*(3,m,\alpha)\) but $p$ is an integer and $|u|^p$ is replaced by $\pm\, u^p$;

(ii) when \(n\geq4\), $\max\{p_{\conf}(n,m,\alpha), \frac{4\alpha}{m}-1\}\leq p\leq p_*(n,m,\alpha)$,
or \(p>\max\{p_*(n,m,\alpha), \frac{4\alpha}{m}-1\}\) but $p$ is an integer and $|u|^p$ is replaced by $\pm\, u^p$,

then there exists a constant $\ve_0>0$ such that problem
\eqref{YH-4-SS}  admits a global weak solution $u\in
L^r([1,\infty)\times \R^{n})$ as
$\|u_0\|_{H^s}+\|u_1\|_{H^{s-\f{2}{m+2}}}\le\ve_0$, where $
s=\f{n}{2}-\f{2(\alpha+2)}{(m+2)(p-1)}$ and $
r=\frac{(m+2)n+2}{4(\beta+1)}\left(p-1\right)$ with \(\beta=\frac{\alpha}{p+1}\).

From {\bf Conclusions 1-2}, returning to the original equation $\p_t^2u-\Delta u+\f{\mu}{t}\p_tu=|u|^p$ with $n\ge 3$,
$\mu\in(0,1)\cup(1,2)$ and $p>p_{conf}(n,\mu)=\frac{n+\mu+3}{n+\mu-1}$, the resulting global existence
results on the small data solution $u$ of \eqref{YH-4-SS} can be obtained.
Here the related details are omitted since our main focus in the present paper is on
the Open question {\bf (B)} for the 2-D case and meanwhile the global existence result
in Open question (A) for $n\ge 3$, $2\le\mu<\bar\mu(n)$ and $p>p_{conf}(n,\mu)$ is not solved. }
\end{remark}

We now comment on the proofs of Theorems~\ref{thm1.2}-\ref{th-L}. To prove the global existence results in
Theorems~\ref{thm1.2}-\ref{th-L}, we shall establish some new classes of time-weighted or spacetime-weighted Strichartz estimates for
the following linear problems
\begin{equation}\label{equ:3.2}
\left\{ \enspace
\begin{aligned}
&\partial_t^2 v-t^m\triangle v=0,&&
(t,x)\in [1,\infty)\times \R^{2},\\
&v(1,x)=f(x),\quad \partial_tv(1,x)=g(x), &&x\in\R^2
\end{aligned}
\right.
\end{equation}
and
\begin{equation}\label{equ:3.3}
\left\{ \enspace
\begin{aligned}
&\partial_t^2 w-t^m\triangle w=F(t,x), &&
(t,x)\in [1,\infty)\times \R^{2},\\
&w(1,x)=0,\quad \partial_tw(1,x)=0,&&x\in\R^2,
\end{aligned}
\right.
\end{equation}
where $m>0$ and $\operatorname{supp} (f,g)\in B(0,M)$ with $M>1$. When $g \equiv 0$ in \eqref{equ:3.2},
such a time-weighted Strichartz inequality is expected
\begin{equation}\label{YC-4}
\| t^\nu v\|_{L^q_tL^r_x}\leq C\,\| f\|_{\dot{H}^s(\R^2)},
\end{equation}
where \(\nu>0\), $q\ge 1$ and $r\ge 1$ are some suitable constants related to $s$ ($0<s<1$),
and $\| f\|_{\dot{H}^s(\R^2)}=\left\| |D_x|^s f\right\|_{L^2(\R^2)}$
with $|D_x|=\sqrt{-\Delta}$.
By a scaling argument, one has from \eqref{YC-4} that
\begin{equation}\label{equ:3.4}
  \nu+\frac{1}{q}+\frac{m+2}{2}\cdot\frac{2}{r}
  =\frac{m+2}{2}\left(1-s\right).
\end{equation}

In addition, by a scaling argument as in \cite{Gls2} and \cite{Rua4},
the equation $\partial_t^2 u-t^{m} \Delta u=t^\alpha|u|^p$
with initial data $(u, \p_tu)(0,x)\in (H^s, H^{s-\f{2}{m+2}})(\Bbb R^2)$ is ill-posed for
\begin{equation*}\label{equ:3.4-1S}
s<s_1(m,\al)=1
-\frac{2}{m+2}\cdot\frac{\alpha+2}{p-1}.
\end{equation*}

On the other hand, it follows from a concentration argument in \cite{Gls2} and \cite{Rua4}
that $\partial_t^2 u-t^{m} \Delta u=t^\alpha|u|^p$
with $(u, \p_tu)(0,x)\in (H^s, H^{s-\f{2}{m+2}})(\Bbb R^2)$ is ill-posed for
\begin{equation*}\label{equ:3.4-1SS}
s<s_2(m,\al)=\frac{3}{4}-\frac{m}{2(m+2)(m+3)}
-\frac{3(\alpha+2)}{2(m+3)(p-1)}.
\end{equation*}

A direct computation shows that for \(p\geq p_{\conf}(2,m,\al)\), one has
\begin{equation}\label{YJY-1}
s_1(m,\al)
\geq s_2(m,\al).
\end{equation}
Especially, for \(p= p_{\conf}(2,m,\al)\), the equality  in \eqref{YJY-1} holds
and further \(1-\frac{2}{m+2}\cdot\frac{\alpha+2}{p-1}=\frac{1}{m+2}\) is derived.
Thus, setting  $s=\frac{1}{m+2}$ and $r=q$ in \eqref{equ:3.4}, one can obtain
an endpoint case of \eqref{YC-4} for $s=\frac{1}{m+2}$,
\begin{equation}\label{equ:3.5-0}
q_0=r_0\equiv \frac{2(m+3)}{m+1-2\nu}>2.
\end{equation}
This  endpoint will be very useful to derive \eqref{YC-4} in order to apply the interpolation method.
Together with the explicit formula of the solution to \eqref{equ:3.2} and some
basic properties of related Fourier integral operator, we arrive at (see Lemma \ref{lem3.3} in Section \ref{sec3} below)
\begin{equation}\label{equ:3.6-1S}
\| t^\nu v\|_{L^q([1,\infty)\times\R^{2})} \leq C(\| f\|_{\dot{H}^s(\R^2)}+\|
g\|_{\dot{H}^{s-\frac{2}{m+2}}(\R^2)}),
\end{equation}
where \(0<\nu\leq\frac{m}{4}\),
$\frac{1}{m+2}\le s<1-\frac{2\nu}{m+2}$, $q=\f{2m+6}{(m+2)(1-s)-2\nu}\ge q_0$ and the generic constant
$C>0$ only depends on $m$, \(\nu\) and $s$.

On the other hand, in Lemma 4.1 and Theorem 5.1 of \cite{LWY}, we have established the following
time-weighted Strichartz estimate and spacetime-weighted Strichartz estimate on the solution $w$ to
\eqref{equ:3.3}, respectively,

\begin{equation}\label{equ:3.34-S}
\|t^\nu w\|_{L^q([1,\infty)\times\R^{2})}\leq C\|t^{-\nu}|D_x|^{\gamma-\frac{1}{m+2}}F\|_{L^{p_0}([1,\infty)\times\R^{2})},
\end{equation}
where $\gamma=1-\frac{2\nu}{m+2} -\frac{2(m+3)}{q(m+2)}$, \(0<\nu\leq\frac{m}{4}\), $q\ge q_0$,
and the constant $C>0$ depends on $m$, $\nu$ and $q$;
\begin{equation}\label{a-6-S}
\big\|\left(\left(\phi_{m}(t)+M\right)^{2}-|x|^{2}\right)^{\gamma_{1}} t^{\frac{\alpha}{q}} w\big\|_{L^{q}\left(\left[1, \infty\right) \times \mathbb{R}^{2}\right)} \leq C\big\|\left(\left(\phi_{m}(t)+M\right)^{2}-|x|^{2}\right)^{\gamma_{2}} t^{-\frac{\alpha}{q}} F\big\|_{L^{\frac{q}{q-1}}\left(\left[1, \infty\right) \times \mathbb{R}^{2}\right)},
\end{equation}
where $-1<\al\leq m$, $2 \leq q \leq \frac{2(m+3+\alpha)}{m+1}$, $0<\gamma_{1}<\frac{m+1}{m+2}-\frac{m+4+2\al}{(m+2) q}$,
$\gamma_{2}>\frac{1}{q}$, $F(t,x)\equiv0$ when $|x|>\phi_m(t)+M$, and $C>0$ is a
constant depending on $m, \al,  q, \gamma_{1}$ and $\gamma_{2}$.

Based on the inequalities \eqref{equ:3.6-1S} and \eqref{equ:3.34-S}, by the contraction mapping principle
and suitable choices of the related Strichartz indices,
the proof of Theorem~\ref{thm1.2} can be completed. For
the proof of Theorem~\ref{th-L}, we will apply \eqref{a-6-S}
together with some appropriate Strichartz indices.
When Theorems \ref{thm1.2}-\ref{th-L} are shown,
by returning to the equation $\p_t^2u-\Delta u+\f{\mu}{t}\p_tu=|u|^p$,
Theorem \ref{YH-1} can be obtained.

\section{Time-weighted and spacetime-weighted Strichartz Estimates}\label{sec3}

In order to prove Theorems~\ref{thm1.2}-\ref{th-L}, it is required to establish some
time-weighted and spacetime-weighted Strichartz estimates for the generalized Tricomi
operator $\partial_t^2-t^m\triangle$ ($m>0$).

\begin{lemma}\label{lem3.3}
Let $v$ solve problem \eqref{equ:3.2}. For \(0<\nu\leq\frac{m}{4}\) and
$\frac{1}{m+2}\le s<1-\frac{2\nu}{m+2} $, we have
\begin{equation}\label{equ:3.6}
\| t^\nu v\|_{L^q([1,\infty)\times\R^{2})} \leq C(\| f\|_{\dot{H}^s(\R^2)}+\|
g\|_{\dot{H}^{s-\frac{2}{m+2}}(\R^2)}),
\end{equation}
where $q=\f{2m+6}{(m+2)(1-s)-2\nu}\ge q_0$ with $q_0$ being defined in \eqref{equ:3.5-0},
and the generic constant
$C>0$ depends on $m$, \(\nu\) and $s$.
\end{lemma}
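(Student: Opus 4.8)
The plan is to write the solution of \eqref{equ:3.2} explicitly as a superposition of Fourier integral operators and then to deduce \eqref{equ:3.6} by proving two endpoint estimates and interpolating between them along the scale-invariant line \eqref{equ:3.4}. Passing to the spatial Fourier variable, $\hat v(t,\xi)$ solves $\p_t^2\hat v+t^m|\xi|^2\hat v=0$, an equation whose solutions are expressed through Bessel functions of order $\pm\f{1}{m+2}$ in the variable $\phi_m(t)|\xi|$, with $\phi_m(t)=\f{2}{m+2}t^{\f{m+2}{2}}$. Invoking their large-argument asymptotics, as in \cite{HWY1,HWY2,HWY3,HWY4,HWY5}, one obtains a representation
\[
v(t,x)=\sum_{\pm}\int_{\R^2}e^{i(x\cdot\xi\pm\phi_m(t)|\xi|)}\,a_\pm(t,\xi)\,\hat f(\xi)\,d\xi+(\text{analogous }g\text{-terms}),
\]
where each amplitude $a_\pm(t,\xi)$ is a symbol of order $0$ in the elliptic zone $\phi_m(t)|\xi|\lesssim1$ and decays like $(\phi_m(t)|\xi|)^{-1/2}$ in the oscillatory zone $\phi_m(t)|\xi|\gtrsim1$. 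The $g$-contribution carries one extra negative power of $|\xi|$, which is exactly what produces the shift by $\f{2}{m+2}$ in the Sobolev exponent of $g$ in \eqref{equ:3.6}; thus it suffices to treat the $f$-term, the $g$-term being entirely analogous.

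For the lower endpoint $s=\f{1}{m+2}$ the exponent in \eqref{equ:3.6} equals $q_0$ as in \eqref{equ:3.5-0}, and the relation \eqref{equ:3.4} with $r=q$ is then scale invariant. I would establish the diagonal bound $\|t^\nu v\|_{L^{q_0}([1,\infty)\times\R^2)}\le C\|f\|_{\dot H^{1/(m+2)}}$ by a $TT^*$ argument: the kernel of the associated operator is an oscillatory integral in $\xi$ with phase $(x-y)\cdot\xi\pm(\phi_m(t)-\phi_m(\tau))|\xi|$, and stationary phase combined with the amplitude decay $(\phi_m(t)|\xi|)^{-1/2}(\phi_m(\tau)|\xi|)^{-1/2}$ yields a kernel estimate from which the $L^{q_0'}\to L^{q_0}$ bound follows via Hardy--Littlewood--Sobolev. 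Here the time weight $t^\nu$ merges with the factor $\phi_m(t)^{-1/2}\sim t^{-\f{m+2}{4}}$ into $t^{\,\nu-\f{m+2}{4}}$, and the hypothesis $0<\nu\le\f{m}{4}$ is precisely what keeps this combined weight integrable in the spacetime kernel, so that the endpoint closes.

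At the upper endpoint $s=1-\f{2\nu}{m+2}$ the formula gives $q=\infty$, and the target is a weighted fixed-time dispersive bound leading to $\|t^\nu v\|_{L^\infty([1,\infty)\times\R^2)}\le C\|f\|_{\dot H^s}$. I would obtain this by frequency localization: on the shell $|\xi|\sim\la$ the amplitude decay gives $\|v_\la(t,\cdot)\|_{L^\infty_x}\lesssim\phi_m(t)^{-1/2}\la^{1/2}\|\hat f\|_{L^2(|\xi|\sim\la)}$ in the oscillatory zone, while the elliptic zone contributes its $O(1)$ amplitude; summing in $\la$ against the weight $t^\nu$ and balancing the powers of $\la$ and $t$ forces exactly $s=1-\f{2\nu}{m+2}$, once more using $\nu\le\f{m}{4}$ for the decay in $t$. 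With both endpoints available, the whole range $\f{1}{m+2}\le s<1-\f{2\nu}{m+2}$ of \eqref{equ:3.6} follows by Stein interpolation applied to the analytic family $|D_x|^{-z}$ composed with the solution operator, together with the complex interpolation $[L^{q_0},L^\infty]_\theta=L^q$ of the targets; the intermediate exponents are dictated by the scale-invariant identity \eqref{equ:3.4} with $r=q$, which reproduces $q=\f{2m+6}{(m+2)(1-s)-2\nu}$.

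I expect the principal obstacle to be the rigorous endpoint estimate of the second paragraph. One must bound the oscillatory integral uniformly across the transition zone $\phi_m(t)|\xi|\sim1$, where the Bessel asymptotics degenerate and the amplitude is neither purely elliptic nor purely oscillatory, and simultaneously track the two time weights so that the admissibility condition $\nu\le\f{m}{4}$ emerges naturally rather than being imposed by hand. By contrast, the reduction to the $f$-term, the upper endpoint, and the final interpolation are comparatively routine and closely follow the scheme of \cite{Gls2} and \cite{Rua4}.
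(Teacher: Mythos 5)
Your first two steps (the Fourier--integral representation and the reduction to the $f$-term, with the $g$-term absorbing the shift $\tfrac{2}{m+2}$) match the paper, but the interpolation scheme you propose has a genuine gap. First, a quantitative error that propagates: the amplitudes do \emph{not} decay like $(\phi_m(t)|\xi|)^{-1/2}$ in the oscillatory zone. The correct rate, coming from $t^{1/2}J_{\pm\frac{1}{m+2}}(\phi_m(t)|\xi|)$, is $(1+\phi_m(t)|\xi|)^{-\frac{m}{2(m+2)}}$ for the $f$-amplitudes (see \eqref{equ:3.16}--\eqref{equ:3.16'}); the exponent $-\tfrac12$ only enters later, through stationary phase in $\xi$ applied to the $TT^*$ kernel. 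Since $\phi_m(t)^{-\frac{m}{2(m+2)}}\sim t^{-\frac{m}{4}}$, it is this rate that produces the threshold $\nu\le\frac{m}{4}$; your rate would give $t^{\nu-\frac{m+2}{4}}$ and hence the wrong threshold $\nu\le\frac{m+2}{4}$, so the condition does not ``emerge naturally'' from your bookkeeping. Second, and more seriously, your upper endpoint $\|t^\nu v\|_{L^\infty}\lesssim\|f\|_{\dot H^{s_1}}$ with $s_1=1-\frac{2\nu}{m+2}$ fails at $\nu=\frac{m}{4}$: on a dyadic shell $|\xi|\sim\lambda$ the correct amplitude bound gives $t^\nu\|v_\lambda(t,\cdot)\|_{L^\infty_x}\lesssim t^{\nu-\frac{m}{4}}\lambda^{1-s_1-\frac{m}{2(m+2)}}\|f\|_{\dot H^{s_1}(|\xi|\sim\lambda)}$, and at $\nu=\frac{m}{4}$ the exponent of $\lambda$ vanishes, so the sum over shells diverges (an $\ell^1$ sum of $\ell^2$ pieces with no geometric gain). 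The value $\nu=\frac{m}{4}$ is allowed by the lemma and is exactly the one needed in Theorem \ref{thm1.2} when $p=\frac{4\alpha}{m}-1$, so this endpoint cannot be discarded.

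The paper never uses an $L^\infty$ endpoint for the solution operator. Instead it interpolates at the level of the frequency-localized $TT^*$ pieces $(AA^*)_\lambda$, between the trivial $L^1\to L^\infty$ kernel bound \eqref{equ:3.27} and the diagonal bound $L^{p_0}\to L^{q_0}$ of \eqref{equ:3.28}, obtaining $L^{q'}\to L^{q}$ with an explicit power of $\lambda$ that is annihilated by the choice $s=1-\frac{2\nu}{m+2}-\frac{2m+6}{(m+2)q}$; the pieces are then summed by the almost-orthogonality lemma of Lindblad--Sogge. Note also that even the diagonal estimate \eqref{equ:3.28}, which you rightly identify as the crux, requires a case split ($0<\nu\le\frac{m}{m+6}$ versus $\frac{m}{m+6}<\nu\le\frac{m}{4}$) in order to distribute the weights $t^\nu\tau^\nu$ and the amplitude decay between the $L^2\to L^2$ and $L^1\to L^\infty$ bounds for $T_{t,\tau}$ so that Hardy--Littlewood--Sobolev applies with the exponent $\frac{m+1-2\nu}{m+3}$; your sketch does not engage with this balance. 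To salvage your scheme you would have to replace the $L^\infty$ endpoint by this dyadic interpolation, at which point you have reproduced the paper's proof.
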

\begin{remark}\label{Sun-2.1}
{\it For $\nu=0$, \eqref{equ:3.6} has been obtained in Lemma 3.3 of \cite{HWY1}.
Obviously, we have establish better time-decay weighted estimate in Lemma \ref{lem3.3}.}
\end{remark}

\begin{proof}
It follows from \cite{Yag2} or analogous proof procedure in Lemma 3.3 of \cite{HWY1}
that the solution $v$ of \eqref{equ:3.2}
can be written as
\[
v(t,x)=V_1(t, D_x)f(x)+V_2(t, D_x)g(x),
\]
where
\[
V_1(t,D_x)f(x)=C_m(\int_{\R^2}
e^{i\left(x\cdot\xi+\phi_m(t)|\xi|\right)} a_1(t,\xi)\hat{f}(\xi)\,d\xi
+\int_{\R^2} e^{i\left(x\cdot\xi-\phi_m(t)|\xi|\right)}a_2(t,\xi)
\hat{f}(\xi)\,d\xi),
\]
\[
V_2(t,D_x)g(x)=C_m(\int_{\R^2}
e^{i\left(x\cdot\xi+\phi_m(t)|\xi|\right)} tb_1(t,\xi)\hat{g}(\xi)\,d\xi
+\int_{\R^2}e^{i\left(x\cdot\xi-\phi_m(t)|\xi|\right)}tb_2(t,\xi)
\hat{g}(\xi)\,d\xi)
\]
with $\phi_m(t)=\frac{2}{m+2}t^{\frac{m+2}{2}}$ and
$C_m>0$ being a generic constant. In addition, $a_l$ and $b_l$ ($l=1,2$) satisfy
\begin{equation}\label{equ:3.16}
  \bigl|\partial_\xi^\kappa a_l(t,\xi)\bigr|\leq
  C_{l\kappa}\left(1+\phi_m(t)|\xi|\right)^{-\f{m}{2(m+2)}}
  |\xi|^{-|\kappa|},
\end{equation}
\begin{equation}\label{equ:3.16'}
  \bigl|\partial_\xi^\kappa b_l(t,\xi)\bigr|\leq
  C_{l\kappa}\left(1+\phi_m(t)|\xi|\right)^{-\f{m+4}{2(m+2)}}
  |\xi|^{-|\kappa|}.
\end{equation}
Due to $t\,\phi_m(t)^{-\frac{m+4}{2(m+2)}}=
C_m\phi_m(t)^{-\frac{m}{2(m+2)}}$, then
\begin{equation}\label{equ:b}
  \bigl|t\partial_\xi^\kappa b_l(t,\xi)\bigr|\leq
  C_{l\kappa}\left(1+\phi_m(t)|\xi|\right)^{-\f{m}{2(m+2)}}
  |\xi|^{-\f{2}{m+2}-|\kappa|}.
\end{equation}
By comparing \eqref{equ:b} with \eqref{equ:3.16}, we see that it suffices only to treat
$\int_{\R^2}e^{i\left(x\cdot\xi+\phi_m(t)|\xi|\right)}
a_1(t,\xi)\hat{f}(\xi)\,d\xi$ since the remaining part of $V_1(t,D_x)f(x)$ and $V_2(t,D_x)g(x)$ can be analogously estimated. Set
\begin{equation}\label{equ:3.17}
(Af)(t,x)=\int_{\R^2}e^{i\left(x\cdot\xi+\phi_m(t)|\xi|\right)}t^\nu a_1(t,\xi)
  \hat{f}(\xi)\,d\xi=\int_{\R^2}e^{i\left(x\cdot\xi+\phi_m(t)|\xi|\right)}
\tilde{a}(t,\xi)\hat{h}(\xi)\,d\xi,
\end{equation}
where $\tilde{a}(t,\xi)=\frac{t^\nu a_1(t,\xi)}{|\xi|^s}$
and $\hat{h}(\xi)=|\xi|^s\hat{f}(\xi)$.

We start to show
\begin{equation}\label{equ:3.19}
\|(Af)(t,x)\|_{L^q([1,\infty)\times\R^{2})}\le
C\,\| h\|_{L^2(\R^2)},
\end{equation}
which is equivalent to
\begin{equation} \label{equ:3.18}
  \|(Af)(t,x)\|_{L^q([1,\infty)\times\R^{2})}\le C\,\| f\|_{\dot{H}^s(\R^2)}.
\end{equation}

To prove \eqref{equ:3.19}, by the dual argument, it is required to derive
\begin{equation}\label{equ:3.20}
\| A^*G\|_{L^2(\R^2)}\le C\, \| G\|_{L^p([1,\infty)\times\R^{2})},
\end{equation}
where
\[
(A^*G)(y)=\int_{\R^2}\int_{[1,\infty)\times\R^{2}}e^{i\left((y-x)\cdot\xi-\phi_m(t)|\xi|\right)}\,
\overline{\tilde{a}(t,\xi)}\,G(t,x)\,dtdxd\xi
\]
is the adjoint operator of $A$, $\f{1}{p}+\f{1}{q}=1$, and $
1\leq p\leq p_0\equiv\frac{2m+6}{m+2\nu+5}$ with
$\f{1}{p_0}+\f{1}{q_0}=1$. Note that
\begin{equation}\label{equ:3.21}
\int_{\R^2}|(A^*G)(y)|^2\,dy =\int_{[1,\infty)\times\R^{2}}(A
A^*G)(t,x)\overline{G(t,x)}\,dtdx \leq\| A A^*G \|_{L^{q}([1,\infty)\times\R^{2})}\|\,
G\|_{L^p([1,\infty)\times\R^{2})}
\end{equation}
and
\begin{equation}\label{equ:3.23}
  (A A^*G)(t,x)=\int_{[1,\infty)\times\R^{2}}\int_{\R^2}
  e^{i\left((\phi_m(t)-\phi_m(\tau))|\xi|+(x-y)\cdot\xi\right)}\,
  \tilde{a}(t,\xi)\,\overline{\tilde{a}(\tau,\xi)}G(\tau,y)\,d\xi
  d\tau dy.
\end{equation}
Therefore, if
\begin{equation}\label{equ:3.22}
\| A A^*G \|_{L^{q}([1,\infty)\times\R^{2})}\le
C\, \| G\|_{L^p([1,\infty)\times\R^{2})} \quad (1\leq p\leq p_0)
\end{equation}
is shown, then \eqref{equ:3.20} holds.

Choosing a function
\begin{equation}\label{equ:3.23-1}
\chi\in
C_0^\infty((1/2,2))\quad \text{with $\ds \sum\limits_{j=-\infty}^\infty\chi\left(2^{-j}\tau\right) \equiv1$ for
$\tau>0$}
\end{equation}
and setting $a_\lambda(t,\tau,\xi)=\chi(|\xi|/\lambda)\tilde{a}(t,\xi)$
$\overline{\tilde{a}(\tau,\xi)}$ for $\lambda>0$, then one can obtain a dyadic
decomposition of the operator $AA^\ast$ as follows
\begin{equation}\label{equ:3.25}
(AA^*)_\lambda G=\int_{[1,\infty)\times\R^{2}}\int_{\R^2}
  e^{i\left((\phi_m(t)-\phi_m(\tau))|\xi|+(x-y)\cdot\xi\right)}a_\lambda(t,\tau,\xi)
  G(\tau,y)\,d\xi d\tau dy.
\end{equation}
It is asserted that
\begin{equation}\label{equ:3.26}
\|(AA^*)_\lambda G \|_{L^{q}([1,\infty)\times\R^{2})}\le
C\, \| G\|_{L^p([1,\infty)\times\R^{2})},
\quad 1\leq p\leq p_0,
\end{equation}
where the constant $C>0$ is independent of $\lambda>0$.

To prove the assertion \eqref{equ:3.26}, we will apply the interpolation argument for the
two endpoint cases of $p=1$ and $p=p_0$.

For $p=1$, one has that for \(0<\nu\leq\frac{m}{4}\),
\[
|a_\lambda(t,\tau,\xi)|\leq t^\nu (1+\phi_m(t)|\xi|)^{-\frac{m}{2(m+2)}}
\tau^\nu (1+\phi_m(\tau)|\xi|)^{-\frac{m}{2(m+2)}}|\xi|^{-2s}
\leq|\xi|^{-\frac{4\nu}{m+2} -2s}
\]
and
\begin{equation}\label{equ:3.27}
\begin{aligned}
\| (AA^*)_\lambda G \|_{L^\infty([1,\infty)\times\R^{2})}
&\leq \int_{[1,\infty)\times\R^{2}}|\int_{\R^2} e^{i[(\phi_m(t)-\phi_m(\tau))|\xi|+(x-y)\cdot\xi]}
a_\lambda(t,\tau,\xi)\,d\xi||G(\tau,y)|\, dyd\tau \\
&\leq \int_{[1,\infty)\times\R^{2}}|\int_{\R^2}\beta(\frac{|\xi|}{\lambda})
|\xi|^{-\frac{4\nu}{m+2}-2s}\,d\xi||G(\tau,y)|\, dyd\tau \\
&\leq C\lambda^{2-\frac{4\nu}{m+2}-2s}\| G\|_{L^1([1,\infty)\times\R^{2})}.
\end{aligned}
\end{equation}
Next we show the case of $p=p_0$ in \eqref{equ:3.26} such that
\begin{equation}\label{equ:3.28}
\| (AA^*)_\lambda G\|_{L^{q_0}([1,\infty)\times\R^{2})}\leq
C\lambda^{\frac{2}{m+2}-2s}\left\| G\right\|_{L^{p_0}([1,\infty)\times\R^{2})}.
\end{equation}
This proof procedure will be divided into the following two parts.

\vskip 0.2 true cm

{\bf Part I. \( 0<\nu\leq\frac{m}{m+6}\)}

\vskip 0.2 true cm

For any $t,\tau\in [1,\infty)$ and $\bar{t}=\max\{t,\tau\}$, the following estimate holds
\begin{equation}\label{equ:3.29}
\left|\partial_\xi^\kappa \Bigl(\bar{t}^{\,\frac{(\nu+1)m}{2m+6}
-\nu}
a_\lambda(t,\tau,\xi)\Bigr)\right|\leq
|\xi|^{-2s-\frac{m(\nu+1)}{(m+2)(m+3)}-\frac{2\nu}{m+2}-|\kappa|}.
\end{equation}
Indeed, without loss of generality, $t\geq\tau$ is assumed. In this situation, it can be deduced from \eqref{equ:3.16} and
a straightforward calculation that
\begin{equation*}
\begin{aligned}
&\left|\partial_\xi^\kappa \Bigl(\bar{t}^{\,\frac{(\nu+1)m}{2m+6}
-\nu}a_\lambda(t,\tau,\xi)\Bigr)\right| \\
&\leq t^{\frac{(\nu+1)m}{2m+6}-\nu}t^\nu
\left(1+\phi_m(t)|\xi|\right)^{-\f{m}{2(m+2)}}\tau^\nu
\left(1+\phi_m(\tau)|\xi|\right)^{-\f{m}{2(m+2)}}|\xi|^{-|\kappa |-2s}  \\
&\leq t^{\frac{(\nu+1)m}{2(m+3)}}\left(1+\phi_m(t)|\xi|\right)^{-\f{(\nu+1)m}{(m+3)(m+2)}}|\xi|^{-\f{2\nu}{m+2}-|\kappa |-2s}\\
&\leq|\xi|^{-2s-\frac{m(\nu+1)}{(m+2)(m+3)}-\frac{2\nu}{m+2}-|\kappa|}.
\end{aligned}
\end{equation*}
Define
\begin{equation*}
b(t,\tau,\xi)=\lambda^{2s+\frac{m(\nu+1)}{(m+2)(m+3)}+\frac{2\nu}{m+2}}\bar{t}^{\,\frac{(\nu+1)m}{2m+6}-\nu}
a_\lambda(t,\tau,\xi).
\end{equation*}
One has
\[
\bigl|\partial_\xi^\kappa b(t,\tau,\xi)\bigr|\leq |\xi|^{-|\kappa|}
\]
and
\begin{multline*}
(AA^*)_\lambda G=\int_{[1,\infty)\times\R^{2}}\int_{\R^2}
  e^{i\left((\phi_m(t)-\phi_m(\tau))|\xi|+(x-y)\cdot\xi\right)}
  \bar{t}^{\,\nu-\frac{(\nu+1)m}{2m+6}}\\
  \times \lambda^{-2s-\frac{m(\nu+1)}{(m+2)(m+3)}-\frac{2\nu}{m+2}}b(t,\tau,\xi)G(\tau,y)\,d\xi dyd\tau.
\end{multline*}
Let
\[
T_{t,\tau}f(x)=\int_{\R^2}\int_{\R^2} e^{i\left((\phi_m(t)-\phi_m(\tau))|\xi|+(x-y)\cdot\xi\right)}
\bar{t}^{\,\nu-\frac{(\nu+1)m}{2m+6}}b(t,\tau,\xi)f(y)\,d\xi dy.
\]
By $\max\{t,\tau\}\geq|t-\tau|$ and \(\nu\leq\frac{m}{m+6}\), we can arrive at
\begin{equation}\label{equ:3.30}
\| T_{t,\tau}f\|_{L^2(\R^2)}\leq C\left|t-\tau\right|^{\nu-\frac{(\nu+1)m}{2m+6}}
\left\| f\right\|_{L^2(\R^2)}.
\end{equation}
On the other hand, it follows from the stationary phase method that
\begin{equation}\label{equ:3.31}
\begin{aligned}
\| T_{t,\tau}f\|_{L^\infty(\R^2)}&\leq
C\lambda^{\frac{3}{2}}\bar{t}^{\, \nu-\frac{(\nu+1)m}{2m+6}}
\left|\phi_m(t)-\phi_m(\tau)\right|^{-\frac{1}{2}}\left\| f\right\|_{L^1(\R^2)} \\
&\leq C\lambda^{\frac{3}{2}}\left|t-\tau\right|^{\nu-\frac{(\nu+1)m}{2m+6}}
\left|t-\tau\right|^{-\frac{m+2}{4}}\left\| f\right\|_{L^1(\R^2)}.
\end{aligned}
\end{equation}
Interpolating \eqref{equ:3.30} and \eqref{equ:3.31} yields
\begin{equation}\label{equ:3.32}
  \| T_{t,\tau}f\|_{L^{q_0}(\R^2)}\leq C\lambda^{\frac{3(\nu+1)}{m+3}}
  \left|t-\tau\right|^{-\frac{m+1-2\nu}{m+3}}\left\| f\right\|_{L^{p_0}(\R^2)}.
\end{equation}
By $1-(\frac{1}{p_0}-\frac{1}{q_0})=
\frac{m+1-2\nu}{m+3}$, we have that by
Hardy-Littlewood-Sobolev inequality,
\begin{multline}\label{equ:3.33}
\| (AA^*)_\lambda G\|_{L^{q_0}([1,\infty)\times\R^{2})}
=\| \int_1^{\infty}T_{t,\tau}G\,d\tau\|_{L^{q_0}([1,\infty)\times\R^{2})}\\
\begin{aligned}
&\leq C\lambda^{-2s-\frac{m(\nu+1)}{(m+2)(m+3)}-\frac{2\nu}{m+2}}
\lambda^{\f{3(\nu+1)}{m+3}}\|\int_{1}^{\infty}
  |t-\tau|^{-\frac{m+1-2\nu}{m+3}}\left\| G(\tau,\cdot)\right\|_{L^{p_0}(\R^2)}\,
  d\tau\|_{L^{q_0}([1, \infty))} \\
&\leq C\lambda^{-2s+\frac{2}{m+2}}\left\| G\right\|_{L^{p_0}([1,\infty)\times\R^{2})}.
\end{aligned}
\end{multline}

\vskip 0.2 true cm

{\bf Part II. \(\frac{m}{m+6}<\nu\leq\frac{m}{4}\)}

\vskip 0.2 true cm

Observe that for $0<\nu\leq\frac{m}{4}$,
\begin{equation}
	\begin{split}
	&	\bigl|\partial_\xi^\kappa a_\lambda (t,\tau,\xi)\bigr| \\
	\leq & t^\nu (1+\phi_m(t)|\xi|)^{-\frac{m}{2(m+2)}}
\tau^\nu (1+\phi_m(\tau)|\xi|)^{-\frac{m}{2(m+2)}}|\xi|^{-2s} |\xi|^{-|\kappa|}
\leq|\xi|^{-\frac{4\nu}{m+2}-2s-|\kappa|}.
	\end{split}
\end{equation}
Define \(b(t,\tau,\xi)=|\xi|^{\frac{4\nu}{m+2}+2s}a_\lambda (t,\tau,\xi)\). Then
\begin{equation}
(AA^*)_\lambda G=\int_{[1,\infty)\times\R^{2}}\int_{\R^2}
  e^{i\left((\phi_m(t)-\phi_m(\tau))|\xi|+(x-y)\cdot\xi\right)}
  |\xi|^{-\frac{4\nu}{m+2}-2s}b(t,\tau,\xi)G(\tau,y)\,d\xi dyd\tau.
\end{equation}
Set
\[
T_{t,\tau}f(x)=\int_{\R^2} \int_{\R^2}  e^{i\left((\phi_m(t)-\phi_m(\tau))|\xi|+(x-y)\cdot\xi\right)}b(t,\tau,\xi)f(y)\,d\xi dy.
\]
Then
\begin{equation}\label{equ:3.35}
\| T_{t,\tau}f\|_{L^2(\R^2)}\leq C\left\| f\right\|_{L^2(\R^2)}.
\end{equation}
In addition, by the method of stationary phase and \(\nu>\frac{m}{m+6}\), one has
\begin{equation}\label{equ:3.36}
\begin{aligned}
\| T_{t,\tau}f\|_{L^\infty(\R^2)}&\leq C\lambda^2
(1+\lambda \left|\phi_m(t)-\phi_m(\tau)\right|)^{-\frac{1}{2}}\left\| f\right\|_{L^1(\R^2)} \\
&\leq C\lambda^2
(1+\lambda \left|\phi_m(t)-\phi_m(\tau)\right|)^{-\frac{m+1-2\nu}{(\nu+1)(m+2) }}\left\| f\right\|_{L^1(\R^2)} \\
&\leq C\lambda^{2-\frac{m+1-2\nu}{(\nu+1)(m+2)}}
\left|t-\tau\right|^{ -\frac{m+1-2\nu}{2(\nu+1) }}\left\| f\right\|_{L^1(\R^2)}.
\end{aligned}
\end{equation}
Together with \eqref{equ:3.35}, this yields
\begin{equation}\label{equ:3.32}
  \| T_{t,\tau}f\|_{L^{q_0}(\R^2)}\leq C\lambda^{\frac{4\nu+2 }{m+2}}
  \left|t-\tau\right|^{-\frac{m+1-2\nu}{m+3}}\left\| f\right\|_{L^{p_0}(\R^2)}.
\end{equation}
It follows from $1-(\frac{1}{p_0}-\frac{1}{q_0})=
\frac{m+1-2\nu}{m+3}$ and the
Hardy-Littlewood-Sobolev inequality that
\begin{multline}\label{equ:3.33-1}
\| (AA^*)_\lambda G\|_{L^{q_0}([1,\infty)\times\R^{2})}
=\| \int_1^{\infty}T_{t,\tau}G\,d\tau\|_{L^{q_0}([1,\infty)\times\R^{2})}\\
\begin{aligned}
&\leq C\lambda^{-2s-\frac{4\nu}{m+2}}\lambda^{\frac{4\nu+2 }{m+2}}\|\int_{1}^{\infty}
  |t-\tau|^{-\frac{m+1-2\nu}{m+3}}\left\| G(\tau,\cdot)\right\|_{L^{p_0}(\R^2)}\,
  d\tau\|_{L^{q_0}[1,\infty)} \\
&\leq C\lambda^{-2s+\frac{2}{m+2}}\left\| G\right\|_{L^{p_0}([1,\infty)\times\R^{2})}.
\end{aligned}
\end{multline}
Collecting \eqref{equ:3.33} and \eqref{equ:3.33-1} derives \eqref{equ:3.28}. Then it follows from the
interpolation between \eqref{equ:3.27} and \eqref{equ:3.28} that for $1\leq p\leq p_0$,
\begin{equation}\label{YHC-1}
\|(AA^*)_\lambda G\|_{L^{q}([1,\infty)\times\R^{2})}\leq
C\lambda^{-2s+2\left(1-\frac{2\nu}{m+2} -\frac{2m+6}{(m+2)q}\right)}\|
G\|_{L^p([1,\infty)\times\R^{2})}.
\end{equation}
Choosing $s=1-\frac{2\nu}{m+2}-\frac{2m+6}{(m+2)q}$ in \eqref{YHC-1}, then
the assertion \eqref{equ:3.26} is shown.

By \eqref{equ:3.26}, it follows from \cite[Lemma~3.8]{Gls2} and
$p\leq p_0=\frac{2m+6}{m+5+2\nu}<2$ that
\begin{equation*}
\begin{aligned}
\| AA^* G \|_{L^{q}}^2&\leq C\sum_{j\in\Z}\| (AA^*)_{2^j}G\|_{L^{q}}^2
\leq C\sum_{j\in\Z}\sum_{k:|j-k|\leq C_0}\| (AA^*)_{2^j}G_k\|_{L^{q}}^2 \\
&\leq C\sum_{j\in\Z}\sum_{k:|j-k|\leq C_0}\| G_k\|_{L^{p}}^2
\le C\, \| G\|_{L^p([1,\infty)\times\R^{2})}^2,
\end{aligned}
\end{equation*}
where $\hat{G}_k(\tau,\xi)=\chi(2^{-k}|\xi|)\,\hat{G}(\tau,\xi)$.
Hence, the estimate \eqref{equ:3.18} corresponding to $t^\nu V_1(t, D_x)f(x)$ holds.
Analogously, $t^\nu V_2(t, D_x)g(x)$ can be treated.
Then the proof of Lemma~\ref{lem3.3} is completed.
\end{proof}
In addition, we cite the following spacetime-weighted Strichartz estimate on the solution $v$ of
problem \eqref{equ:3.2}.

\begin{lemma} [see Lemma 3.1 of \cite{LWY}]\label{th2-1-S}
Let $v$ solve problem \eqref{equ:3.2}.
Then it holds that
\begin{equation}\label{a-2-S}
\begin{aligned}
&\|(\left(\phi_{m}(t)+M\right)^{2}-|x|^{2})^{\gamma} t^{\frac{\alpha}{q}} v\|_{L^{q}\left(\left[1, +\infty\right) \times \mathbb{R}^{2}\right)} \\
& \leq C(\|f\|_{W^{\frac{m+3}{m+2}+\delta, 1}\left(\mathbb{R}^{2}\right)}+\|g\|_{W^{\frac{m+1}{m+2}+\delta, 1}\left(\mathbb{R}^{2}\right)}),
\end{aligned}
\end{equation}
where $\operatorname{supp} (f,g)\in B(0,M)$ with $M>1$, $\alpha>-1$,  $p_{\text {crit }}(2,m, \alpha)+1<q<p_{\text {conf}}(2,m,\alpha)+1$,
the positive constants $\gamma$ and $\delta$ fulfill
\begin{equation}\label{con3-S}
0<\gamma<\frac{m+1}{m+2}-\frac{m+4+2\al}{(m+2) q}, \quad 0<\delta<\frac{m+1}{m+2}-\gamma-\frac{1}{q},
\end{equation}
and the positive constant $C$ depends on $m,  q,\al,\gamma$ and $\delta$.
\end{lemma}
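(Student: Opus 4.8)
The plan is to establish the spacetime-weighted estimate \eqref{a-2-S} by coupling the explicit Fourier integral representation of $v$ with the geometry of the Tricomi light cone $|x|=\phi_m(t)$, in the spirit of Georgiev--Lindblad--Sogge type weighted Strichartz estimates adapted to $\partial_t^2-t^m\Delta$. First I would invoke the same representation $v=V_1(t,D_x)f+V_2(t,D_x)g$ employed in the proof of Lemma~\ref{lem3.3}, whose symbols obey the decay bounds \eqref{equ:3.16}--\eqref{equ:3.16'}. Since \eqref{equ:b} shows that the $g$-contribution $tb_l$ carries an extra $|\xi|^{-\f{2}{m+2}}$ factor over $a_l$, matching the gap $\f{m+3}{m+2}-\f{m+1}{m+2}=\f{2}{m+2}$ between the two data norms, it suffices to bound the single model operator with phase $x\cdot\xi+\phi_m(t)|\xi|$ and symbol $a_1$, mapping $f$ into the weighted space after removing $\f{m+3}{m+2}+\delta$ derivatives. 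Because the data lie in the $L^1$-based spaces $W^{\f{m+3}{m+2}+\delta,1}$ and $W^{\f{m+1}{m+2}+\delta,1}$, I would estimate this operator directly from $L^1$ into $L^q$ via pointwise kernel bounds, rather than through the $TT^*$/duality route used for the $\dot H^s$ data of Lemma~\ref{lem3.3}. Finite propagation speed guarantees that $v$ vanishes for $|x|>\phi_m(t)+M$, so the weight $(\phi_m(t)+M)^2-|x|^2$ is nonnegative and the problem is genuinely one of controlling concentration near the cone.

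Next I would introduce a dyadic frequency decomposition $v=\sum_\lambda v_\lambda$ with $|\xi|\sim\lambda$ and analyze each convolution kernel $K_\lambda(t,x-y)$ by the method of stationary phase. The gradient in $\xi$ of the phase $(x-y)\cdot\xi+\phi_m(t)|\xi|$ vanishes only when $|x-y|\sim\phi_m(t)$, so $K_\lambda$ concentrates in an $O(\lambda^{-1})$ neighborhood of the cone $|x-y|=\phi_m(t)$, with amplitude governed by the decay factor $(1+\phi_m(t)\lambda)^{-\f{m}{2(m+2)}}$ from \eqref{equ:3.16} and the two-dimensional Jacobian $\lambda^2$. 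Here I would separate the non-oscillatory regime $\lambda\phi_m(t)\le 1$, where only the trivial amplitude bound is available, from the dispersive regime $\lambda\phi_m(t)\gg 1$, where the stationary-phase decay of order one-half in the direction normal to the cone is in force.

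The crux is then a sharp weighted pointwise kernel bound. Near the cone one has $(\phi_m(t)+M)^2-|x|^2\approx \phi_m(t)\cdot d$, where $d$ is the distance of $(t,x)$ to the support boundary; since $K_\lambda$ is supported where $d\le\lambda^{-1}$, the weight $((\phi_m(t)+M)^2-|x|^2)^{\gamma}$ contributes a favorable factor $(\phi_m(t)\lambda^{-1})^{\gamma}$ to be balanced against the powers of $\lambda$ from the Jacobian and the dispersive decay. Applying Minkowski's inequality in the convolution variable and integrating the kernel bound over $(t,x)$, I expect the per-dyadic estimate $\|((\phi_m(t)+M)^2-|x|^2)^{\gamma}t^{\f{\al}{q}}v_\lambda\|_{L^q([1,\infty)\times\R^2)}\le C\lambda^{\f{m+3}{m+2}}\|f_\lambda\|_{L^1(\R^2)}$ with $C$ uniform in $\lambda$, where the constraints on $\gamma$, $\al$ and $q$ in \eqref{con3-S} together with $q<p_{\conf}(2,m,\al)+1$ are exactly what force convergence of the spacetime integral and pin the $\lambda$-power at $\f{m+3}{m+2}$. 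Summing over dyadic $\lambda$ against $\|f\|_{W^{\f{m+3}{m+2}+\delta,1}}\approx\sum_\lambda\lambda^{\f{m+3}{m+2}+\delta}\|f_\lambda\|_{L^1}$, the gain $\lambda^{-\delta}$ from $\delta>0$ renders the series summable and yields \eqref{a-2-S}; the $g$-term is treated identically using \eqref{equ:3.16'}.

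The main obstacle will be the sharp weighted kernel bound at the light cone, where the weight vanishes precisely where $K_\lambda$ concentrates. Extracting the correct power of $\lambda$ demands a stationary-phase analysis that isolates the cone-normal direction, which controls both $d$ and the dispersive gain, from the tangential directions, and that carefully tracks the competition between the vanishing weight $\phi_m(t)\,d$ and the $O(\lambda^{-1})$ width of the kernel. The lower bound $q>p_{\crit}(2,m,\al)+1$ keeps the $t$-integral convergent at late times, while the upper endpoint $q<p_{\conf}(2,m,\al)+1$ marks exactly the threshold beyond which the weight can no longer absorb this concentration; verifying that the ranges in \eqref{con3-S} make the final spacetime integration converge is therefore the most delicate part of the argument.
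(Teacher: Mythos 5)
This lemma is not proved in the present paper: it is imported verbatim from Lemma 3.1 of \cite{LWY}, so there is no in-paper argument to compare yours against. That said, your outline is essentially the standard Georgiev--Lindblad--Sogge strategy that the cited reference follows for the Tricomi operator: the FIO representation with symbol bounds \eqref{equ:3.16}--\eqref{equ:3.16'}, dyadic frequency localization, stationary-phase kernel bounds, an $L^1\to L^q$ estimate per dyadic block (using $\operatorname{supp} f\subset B(0,M)$ to transfer the non-translation-invariant weight onto the convolution variable), and summation paid for by the extra $\delta$ derivatives in the data norm. A rough computation confirms your reading of the constraints: the upper bound $\gamma<\frac{m+1}{m+2}-\frac{m+4+2\alpha}{(m+2)q}$ is exactly what makes the $t$-integral of the weighted kernel converge at infinity, and the per-block exponent comes out as $\lambda^{\frac{m+3}{m+2}-\gamma-\frac1q}$, which is summable against $W^{\frac{m+3}{m+2}+\delta,1}$.

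One caution: your step ``$K_\lambda$ is supported where $d\le\lambda^{-1}$'' is not literally true and, taken at face value, would hide the main difficulty. The kernel only decays like $(1+\lambda\,\big|\,|x-y|-\phi_m(t)\big|)^{-N}$ away from the cone, and it is the integral of the vanishing weight $(\phi_m(t)\,d)^{q\gamma}$ against these tails over all $d$, together with the subsequent $t$-integration, that produces the restrictions \eqref{con3-S}; if the kernel were genuinely supported in the $\lambda^{-1}$-shell no upper bound on $\gamma$ would be needed. Likewise the low-frequency regime $\lambda\phi_m(t)\lesssim 1$ (where there is no dispersive gain but $t$ may be large) must be handled separately and is where the condition on $\delta$ and the lower bound $q>p_{\crit}(2,m,\alpha)+1$ earn their keep; your sketch acknowledges this regime but does not carry it out.
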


Next we cite two Strichartz estimates in \cite{LWY} for inhomogeneous problem \eqref{equ:3.3}.

\begin{lemma} [see Lemma 4.1 of \cite{LWY}]\label{lem3.4}
Let $w$ solve \eqref{equ:3.3}. Then
\begin{equation}\label{equ:3.34}
\|t^\nu w\|_{L^q([1,\infty)\times\R^{2})}\leq C\,\bigl\|t^{-\nu}|D_x|^{\gamma-\frac{1}{m+2}}
F\bigr\|_{L^{p_0}([1,\infty)\times\R^{2})},
\end{equation}
where $\gamma=1-\frac{2\nu}{m+2} -\frac{2m+6}{q(m+2)}$, \(0<\nu\leq\frac{m}{4} \), $q\ge q_0$,
and $C>0$ depends on $m$, $\nu$ and $q$.
\end{lemma}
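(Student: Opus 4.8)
The plan is to derive \eqref{equ:3.34} from the homogeneous estimate of Lemma~\ref{lem3.3} by Duhamel's principle combined with a $TT^{*}$ argument, reusing essentially verbatim the Fourier integral operator machinery already set up in the proof of Lemma~\ref{lem3.3}. First I would write the solution $w$ of the inhomogeneous problem \eqref{equ:3.3} via Duhamel's formula as a retarded superposition of homogeneous solutions carrying velocity-type data,
\[
w(t,x)=\int_1^t W(t,\tau,D_x)F(\tau,\cdot)(x)\,d\tau,
\]
where, by the explicit construction recalled at the start of the proof of Lemma~\ref{lem3.3}, the forward propagator $W(t,\tau,D_x)$ is a sum of Fourier integral operators with phases $x\cdot\xi\pm(\phi_m(t)-\phi_m(\tau))|\xi|$ and amplitudes of the same type as the $tb_l(t,\xi)$ appearing in $V_2(t,D_x)$. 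The decisive feature is the extra factor $|\xi|^{-\f{2}{m+2}}$ recorded in \eqref{equ:b}: this $\f{2}{m+2}$ smoothing of the velocity propagator is exactly what shifts the frequency weight on the right-hand side to $|D_x|^{\gamma-\f{1}{m+2}}$ and produces the dual time weight $t^{-\nu}$. Concretely, after the substitution $G=t^{-\nu}|D_x|^{\gamma-\f{1}{m+2}}F$, the operator $F\mapsto t^\nu w$ becomes an integral in $\tau$ whose frozen-time kernel $K(t,\tau)=t^\nu\tau^\nu\,W(t,\tau,D_x)|D_x|^{\f{1}{m+2}-\gamma}$ carries a dispersive amplitude decaying in both $\phi_m(t)|\xi|$ and $\phi_m(\tau)|\xi|$, structurally matching the amplitude $a_\lambda(t,\tau,\xi)$ of the homogeneous proof.

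The core estimate is then identical in spirit to the bounds \eqref{equ:3.26}--\eqref{equ:3.28} already established. I would perform the dyadic decomposition $|\xi|\sim\lambda=2^j$ as in \eqref{equ:3.23-1} and, for each fixed pair $(t,\tau)$, prove the two endpoint bounds for the frozen-time operators $T_{t,\tau}$: the $L^2\to L^2$ energy estimate from the amplitude bounds, and the $L^1\to L^\infty$ dispersive estimate via the stationary phase method applied to the phase difference $(\phi_m(t)-\phi_m(\tau))|\xi|$, exactly as in \eqref{equ:3.30}--\eqref{equ:3.31} and \eqref{equ:3.35}--\eqref{equ:3.36}. Interpolating these yields $\|T_{t,\tau}f\|_{L^{q_0}}\le C\lambda^{\sigma(m,\nu)}|t-\tau|^{-\f{m+1-2\nu}{m+3}}\|f\|_{L^{p_0}}$ with the same critical decay exponent $\f{m+1-2\nu}{m+3}=1-(\f{1}{p_0}-\f{1}{q_0})$, so that integration in $\tau$ is controlled by the Hardy--Littlewood--Sobolev inequality and the summation over $j$ by the orthogonality argument of \cite[Lemma~3.8]{Gls2}, precisely as in \eqref{equ:3.33} and \eqref{equ:3.33-1}. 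The $\lambda$-power $\sigma(m,\nu)$ must cancel against the prefactor after enforcing $\gamma=1-\f{2\nu}{m+2}-\f{2m+6}{q(m+2)}$, just as the choice $s=\gamma$ eliminated the surviving $\lambda$-power in \eqref{YHC-1}. This delivers the desired bound for the \emph{non-retarded} operator $\int_1^\infty W(t,\tau,D_x)F(\tau)\,d\tau$.

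The main obstacle is that the Duhamel integral is retarded, running only over $\tau\in[1,t]$, whereas the $TT^{*}$/Hardy--Littlewood--Sobolev scheme naturally controls the full operator $\int_1^\infty$. This passage cannot be made by a crude pointwise domination, and I would resolve it by invoking the Christ--Kiselev lemma, which applies precisely because the source exponent is strictly smaller than the target exponent: here $p_0=\f{2m+6}{m+2\nu+5}<2<q_0\le q$, so $p_0<q$ for all admissible $\nu$ and $q$. A secondary point requiring care is the bookkeeping of the weights $t^{\pm\nu}$ and $|D_x|^{\gamma-\f{1}{m+2}}$: one must verify that the substitution $G=t^{-\nu}|D_x|^{\gamma-\f{1}{m+2}}F$ indeed reduces $K(t,\tau)$ to the $a_\lambda$-type amplitude, using the $\f{2}{m+2}$ gain of \eqref{equ:b} together with the relation $\gamma=s$. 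Once the causal truncation and this exponent matching are settled, the estimate \eqref{equ:3.34} follows. I expect the Christ--Kiselev reduction to be the step most easily overlooked, since all the hard harmonic analysis has already been front-loaded into Lemma~\ref{lem3.3}.
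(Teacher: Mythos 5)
The paper itself offers no proof of this lemma---it is imported wholesale from Lemma 4.1 of \cite{LWY}---so I can only judge your argument on its own terms. Its ingredients are the standard ones and are correctly identified: the Duhamel representation through the Fourier integral parametrix, the $|\xi|^{-\frac{2}{m+2}}$ gain of the velocity amplitude recorded in \eqref{equ:b} as the ultimate source of the weight $|D_x|^{\gamma-\frac{1}{m+2}}$, the frozen-time $L^2\to L^2$ and dispersive $L^1\to L^\infty$ bounds, the Hardy--Littlewood--Sobolev inequality in $\tau$, the dyadic orthogonality argument of \cite[Lemma~3.8]{Gls2}, and---the step most often forgotten---the Christ--Kiselev lemma of \cite{Chr2} to pass from the untruncated operator to the retarded one, which is legitimate here precisely because $p_0<2<q_0\le q$.

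The step that does not go through as written is the off-diagonal case $q>q_0$. The symmetric $TT^*$ scheme you propose to recycle from Lemma \ref{lem3.3} (interpolating \eqref{equ:3.27} against \eqref{equ:3.28}) only yields bounds between \emph{dual} exponent pairs $L^{q'}\to L^{q}$, whereas \eqref{equ:3.34} pairs the \emph{fixed} source exponent $p_0=q_0'$ with every $q\ge q_0$. Your frozen-time interpolation lands on $\|T_{t,\tau}f\|_{L^{q_0}}\lesssim |t-\tau|^{-\frac{m+1-2\nu}{m+3}}\|f\|_{L^{p_0}}$ and therefore only delivers the diagonal case $q=q_0$ (where, consistently, $\gamma-\frac{1}{m+2}=0$ and your kernel symbol $|\xi|^{-\gamma-\frac{1}{m+2}}$ coincides with the $|\xi|^{-2s}$ of $a_\lambda$); for $q>q_0$ these two powers differ and the advertised cancellation of the $\lambda$-power fails. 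The standard repair, which also spares you from redoing any kernel analysis, is to factor the untruncated Duhamel operator as a composition of the homogeneous position-data operator at exponent $q$ (bounded by Lemma \ref{lem3.3} from $\dot{H}^{\gamma}$ into $t^{-\nu}L^q$) with the adjoint of the homogeneous velocity-data operator at the endpoint $q_0$ (bounded by duality from $t^{\nu}L^{p_0}$ into $\dot{H}^{\frac{1}{m+2}}$, the $|\xi|^{-\frac{2}{m+2}}$ Wronskian gain being assigned to the $\tau$-side factor); commuting $|D_x|^{\gamma-\frac{1}{m+2}}$ through the adjoint produces exactly the right-hand side of \eqref{equ:3.34}, the cross terms with phase $(\phi_m(t)+\phi_m(\tau))|\xi|$ being handled identically with at least as much dispersive decay, and Christ--Kiselev then restores the causal truncation. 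With that substitution your outline closes and is, in all likelihood, the argument actually used in \cite{LWY}.
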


\begin{lemma} [see (1.36) and Theorem 5.1 of \cite{LWY}]\label{lem3.4-SS}
Let $w$ solve \eqref{equ:3.3}. Then
\begin{equation}\label{a-6-SS}
\big\|\left(\left(\phi_{m}(t)+M\right)^{2}-|x|^{2}\right)^{\gamma_{1}} t^{\frac{\alpha}{q}} w\big\|_{L^{q}\left(\left[1, \infty\right) \times \mathbb{R}^{2}\right)} \leq C\big\|\left(\left(\phi_{m}(t)+M\right)^{2}-|x|^{2}\right)^{\gamma_{2}} t^{-\frac{\alpha}{q}} F\big\|_{L^{\frac{q}{q-1}}\left(\left[1, \infty\right) \times \mathbb{R}^{2}\right)},
\end{equation}
where $-1<\al\leq m$, $2 \leq q \leq \frac{2(m+3+\alpha)}{m+1}$, $0<\gamma_{1}<\frac{m+1}{m+2}-\frac{m+4+2\al}{(m+2) q}$,
$\gamma_{2}>\frac{1}{q}$, $F(t,x)\equiv0$ when $|x|>\phi_m(t)+M$, and $C>0$ is a constant depending on $m, \al,  q, \gamma_{1}$ and $\gamma_{2}$.
\end{lemma}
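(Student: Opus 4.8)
The plan is to obtain the retarded estimate \eqref{a-6-SS} by combining Duhamel's formula with the explicit Fourier integral representation used in Lemma \ref{lem3.3} and the homogeneous spacetime-weighted estimate \eqref{a-2-S}, closed by a duality argument. Writing the solution of \eqref{equ:3.3} as $w(t,x)=\int_1^t W(t,s)[F(s,\cdot)](x)\,ds$, where $W(t,s)$ is the propagator of $\partial_t^2-t^m\Delta$ sending Cauchy data posed at time $s$ to the solution at time $t$, the operator $W(t,s)$ is realized through the same oscillatory integrals as in Lemma \ref{lem3.3}, now with phase $x\cdot\xi\pm(\phi_m(t)-\phi_m(s))|\xi|$ and amplitudes controlled by \eqref{equ:3.16'} and \eqref{equ:b}. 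Setting $w_1=((\phi_m(t)+M)^2-|x|^2)^{\gamma_1}t^{\alpha/q}$ and $w_2=((\phi_m(t)+M)^2-|x|^2)^{\gamma_2}t^{-\alpha/q}$, duality on the $L^q$-norm reduces \eqref{a-6-SS} to the bilinear bound
\begin{equation*}
\Big|\int_1^\infty\!\!\int_1^t\langle W(t,s)F(s,\cdot),H(t,\cdot)\rangle\,ds\,dt\Big|\le C\,\|w_2F\|_{L^{q/(q-1)}}\,\big\|w_1^{-1}H\big\|_{L^{q/(q-1)}},
\end{equation*}
in which, by the support hypothesis $F\equiv0$ for $|x|>\phi_m(t)+M$ together with finite propagation speed, both $F(s,\cdot)$ and $w(t,\cdot)$ remain inside the forward cone where the weights are positive.

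First I would note that a direct application of \eqref{a-2-S} does not close the duality, since its data norm is the $L^1$-based $W^{\cdot,1}$-norm, whose dual is an $L^\infty$-type space incompatible with the $L^{q/(q-1)}$ structure of $F$; hence the retarded kernel must be estimated explicitly rather than as a formal $TT^*$ product. I would therefore dyadically decompose in frequency $|\xi|\sim\lambda$ exactly as in \eqref{equ:3.23-1}--\eqref{equ:3.25}, and apply the stationary-phase bounds behind \eqref{equ:3.31} and \eqref{equ:3.36} to the kernel of $W(t,s)$, yielding the dispersive decay in the phase variable $\phi_m(t)-\phi_m(s)$. The crucial and genuinely new step is to carry the two spacetime weights $w_1,w_2$ through this analysis by passing to coordinates adapted to the cone, for instance the null-type variables $\phi_m(t)\pm|x|$, so that $((\phi_m(t)+M)^2-|x|^2)^{\gamma}$ splits into a product of one-dimensional weights. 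After integrating out the angular and frequency variables, the estimate collapses to a two-weight fractional-integration (Hardy--Littlewood--Sobolev) inequality along the null variables, whose scaling balance is met precisely when $2\le q\le\frac{2(m+3+\alpha)}{m+1}$, $0<\gamma_1<\frac{m+1}{m+2}-\frac{m+4+2\alpha}{(m+2)q}$ and $\gamma_2>\frac1q$, which is exactly the admissible range asserted. Finally, the time-ordering $s\le t$ in the Duhamel integral is restored from the untruncated bound by the Christ--Kiselev lemma, valid since $\frac{q}{q-1}<q$ for $q>2$, with the endpoint $q=2$ handled by a separate energy-type argument.

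The hard part will be the weight coupling through the propagator. Unlike the pure time weight $t^\nu$ of Lemma \ref{lem3.3}, the factor $((\phi_m(t)+M)^2-|x|^2)^{\gamma}$ entangles $t$ and $x$ and degenerates on the characteristic cone, so the slice-by-slice stationary-phase estimates of Lemma \ref{lem3.3} cannot be used verbatim; moreover the generalized Tricomi operator lacks the exact Lorentz/conformal symmetry that makes the analogous wave-equation estimate of Georgiev--Lindblad--Sogge reduce cleanly to a one-dimensional inequality. The technical core is thus to show that the amplitude bounds \eqref{equ:3.16'}, together with a stationary-phase analysis that is uniform up to the cone, reproduce enough of that conformal structure for the two-weight fractional integration to converge, and to verify that the exponents $\gamma_1,\gamma_2,q$ recombine at the stated endpoints rather than merely in their interior.
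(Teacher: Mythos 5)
A preliminary point of comparison: the paper does not prove Lemma \ref{lem3.4-SS} at all --- it is imported verbatim from (1.36) and Theorem 5.1 of \cite{LWY} --- so there is no in-paper argument to measure your sketch against. Judged on its own terms, your outline correctly identifies where the difficulty lies but does not close it. The entire content of the lemma sits in the step you describe as a ``two-weight fractional-integration (Hardy--Littlewood--Sobolev) inequality along the null variables,'' whose admissible range you claim coincides ``precisely'' with $2\le q\le\frac{2(m+3+\alpha)}{m+1}$, $0<\gamma_1<\frac{m+1}{m+2}-\frac{m+4+2\alpha}{(m+2)q}$, $\gamma_2>\frac{1}{q}$. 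That inequality is asserted, not derived, and nothing in the sketch explains how the parameter $\alpha$ enters the upper bound on $q$ or the upper bound on $\gamma_1$; those exponents are exactly what the lemma is about.

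Beyond that, the mechanism you propose is unlikely to produce the cone weight. The frequency-dyadic decomposition and the stationary-phase bounds \eqref{equ:3.31}, \eqref{equ:3.36} yield decay in $|\phi_m(t)-\phi_m(\tau)|$ only; they are blind to the distance $\phi_m(t)+M-|x|$ to the characteristic cone, which is precisely the quantity in which the weight $\left(\left(\phi_m(t)+M\right)^2-|x|^2\right)^{\gamma}$ degenerates. Estimates of this type (for the wave equation in \cite{Gls1}, and for the Tricomi operator in the authors' companion papers) are obtained instead by decomposing dyadically in the cone-distance variable itself, or by expanding in spherical harmonics and controlling the resulting one-dimensional kernels by explicit pointwise asymptotics uniformly up to the cone; the $TT^*$/Hardy--Littlewood--Sobolev machinery of Lemma \ref{lem3.3} does not transfer, which is essentially the obstruction you yourself flag in your final paragraph without resolving it. Finally, the Christ--Kiselev reduction you invoke requires $\frac{q}{q-1}<q$ and therefore fails at the endpoint $q=2$, which the lemma explicitly includes; the ``separate energy-type argument'' for that case is not supplied. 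As it stands the proposal is a plan with the decisive inequality missing, not a proof.
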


Base on Lemma~\ref{lem3.4}, we have

\begin{lemma}\label{lem3.5}
Let $w$ solve \eqref{equ:3.3}. Then for \(0<\nu\leq\frac{m}{4}\),
\begin{equation}\label{equ:3.44}
  \|t^\nu w\|_{L^q([1,\infty)\times\R^2)}+\|t^\nu|D_x|^{\gamma-\frac{1}{m+2}} w\|_{L^{q_0}([1,\infty)\times\R^2)}\leq
  C\|t^{-\beta}|D_x|^{\gamma-\frac{1}{m+2}} F\|_{L^{p_0}([1,\infty)\times\R^2)},
\end{equation}
where $\gamma=1-\frac{2\nu}{m+2} -\frac{2(m+3)}{q(m+2)}$, $q\ge q_0$,
and the positive constant $C$ depends on $m$, $\nu$ and $q$.
\end{lemma}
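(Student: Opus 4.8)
The plan is to obtain \eqref{equ:3.44} by reducing both summands on the left-hand side to direct applications of Lemma \ref{lem3.4}, the key structural fact being that the Fourier multiplier $|D_x|^{\gamma-\f{1}{m+2}}$ commutes with the Tricomi operator $\p_t^2-t^m\Delta$. The first summand $\|t^\nu w\|_{L^q([1,\infty)\times\R^2)}$ is exactly the conclusion \eqref{equ:3.34} of Lemma \ref{lem3.4}, since the exponent $\gamma=1-\f{2\nu}{m+2}-\f{2(m+3)}{q(m+2)}$ appearing in \eqref{equ:3.44} coincides with the one in \eqref{equ:3.34}. Hence the only genuinely new content is the gain of the extra term $\|t^\nu|D_x|^{\gamma-\f{1}{m+2}}w\|_{L^{q_0}([1,\infty)\times\R^2)}$; here I read the weight $t^{-\beta}$ on the right of \eqref{equ:3.44} as $t^{-\nu}$, which is the only choice consistent with \eqref{equ:3.34}.

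For the second summand, I would set $\tilde w=|D_x|^{\gamma-\f{1}{m+2}}w$. Since $|D_x|^{\gamma-\f{1}{m+2}}$ acts only in the $x$-variable, it commutes with $\p_t^2$, with multiplication by $t^m$, and with $\Delta$, and it preserves the vanishing Cauchy data at $t=1$. Consequently $\tilde w$ solves problem \eqref{equ:3.3} with source $\tilde F=|D_x|^{\gamma-\f{1}{m+2}}F$ and zero initial data. I would then apply Lemma \ref{lem3.4} to $\tilde w$ at the endpoint index $q=q_0$.

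The crucial computation is that at $q=q_0=\f{2(m+3)}{m+1-2\nu}$ the exponent produced by Lemma \ref{lem3.4} collapses: one checks $\f{2(m+3)}{q_0(m+2)}=\f{m+1-2\nu}{m+2}$, so that $1-\f{2\nu}{m+2}-\f{2(m+3)}{q_0(m+2)}=\f{1}{m+2}$, whence the source-side derivative order is $\f{1}{m+2}-\f{1}{m+2}=0$. Therefore Lemma \ref{lem3.4} applied to $\tilde w$ gives
\begin{equation*}
\|t^\nu\tilde w\|_{L^{q_0}([1,\infty)\times\R^2)}\le C\,\|t^{-\nu}\tilde F\|_{L^{p_0}([1,\infty)\times\R^2)}
=C\,\|t^{-\nu}|D_x|^{\gamma-\f{1}{m+2}}F\|_{L^{p_0}([1,\infty)\times\R^2)},
\end{equation*}
which is exactly the bound for $\|t^\nu|D_x|^{\gamma-\f{1}{m+2}}w\|_{L^{q_0}}$ by the right-hand side of \eqref{equ:3.44}. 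Adding this to the first-term estimate supplied by Lemma \ref{lem3.4} yields \eqref{equ:3.44}.

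I expect the main point to be bookkeeping rather than hard analysis: justifying the commutation of $|D_x|^{\gamma-\f{1}{m+2}}$ with $\p_t^2-t^m\Delta$ (immediate at the level of the Fourier-integral representation underlying Lemma \ref{lem3.3}, since everything is diagonalized in $\xi$), and verifying the endpoint algebra $\gamma|_{q=q_0}=\f{1}{m+2}$ that forces the source-side derivative to vanish. Once these two checks are in place, the result follows from two direct invocations of Lemma \ref{lem3.4} — one at the general index $q$ and one at the endpoint $q_0$ — with no interpolation required.
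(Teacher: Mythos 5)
Your proposal is correct and follows essentially the same route as the paper: the paper likewise commutes $|D_x|^{\gamma-\frac{1}{m+2}}$ through $\partial_t^2-t^m\Delta$, applies Lemma \ref{lem3.4} at $q=q_0$ where the corresponding exponent $\gamma_0=1-\frac{2\nu}{m+2}-\frac{2(m+3)}{q_0(m+2)}$ collapses to $\frac{1}{m+2}$ so the source-side derivative order vanishes, and combines this with the direct application of Lemma \ref{lem3.4} at the general index $q$. Your reading of $t^{-\beta}$ as $t^{-\nu}$ is also the intended one (a leftover of notation from an earlier version, as the paper's own proof mixes $\beta$ and $\nu$ in the same way).
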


\begin{proof}
Note that
\[
\left(\partial_t^2-t^m \Delta\right)|D_x|^{\gamma-\frac{1}{m+2}} w=|D_x|^{\gamma-\frac{1}{m+2}} F.
\]
It follows from Lemma~\ref{lem3.4} with $q=q_0$ and the corresponding
$\gamma_0=1-\frac{2\nu}{m+2} -\frac{2(m+3)}{q_0(m+2)}=\f{1}{m+2}$ that
$$
\begin{aligned}
\|t^\beta|D_x|^{\gamma-\frac{1}{m+2}} w\|_{L^{q_0}([1,\infty)\times\R^2)}&\leq
C\|t^{-\beta}|D_x|^{\gamma_0-\frac{1}{m+2}}|D_x|^{\gamma-\frac{1}{m+2}} F\|_{L^{p_0}([1,\infty)\times\R^2)}\\
&=C\|t^{-\beta}|D_x|^{\gamma-\frac{1}{m+2}} F\|_{L^{p_0}([1,\infty)\times\R^2)}.
\end{aligned}
$$
This, together with Lemma~\ref{lem3.4}, derives \eqref{equ:3.44}.
\end{proof}
\section{Proof of Theorems~\ref{thm1.2}-\ref{th-L}}

We utilize the following  iteration scheme
to establish the global existence results in Theorems~\ref{thm1.2}-\ref{th-L}:
\begin{equation}\label{equ:4.1}
\left\{ \enspace
\begin{aligned}
&\partial_t^2 u_k-t^m \Delta u_k =t^\alpha |u_{k-1}|^p,  &&
(t,x)\in [1,\infty)\times \R^{2}, \\
&u_k(1,\cdot)=u_0(x), \quad \partial_{t} u_k(1,\cdot)=u_1(x), &&x\in\R^2,
\end{aligned}
\right.
\end{equation}
where $u_{-1}\equiv0$.

\begin{proof}[Proof of Theorem~\ref{thm1.2}]
We next show that there is a solution $u\in L^r([1,\infty)\times\R^2)$ of
\eqref{YH-4} with
$r=\frac{m+3}{2(\nu+1)}\left(p-1\right)$  and \(\nu=\frac{\alpha}{p+1}\) such that
$u_k\rightarrow u$ and $t^{\al}|u_k|^p\rightarrow t^{\al}|u|^p$ in
$\mathcal{D}'([1,\infty)\times\R^2)$ as $k\to\infty$.

Due to $p\ge\max\{p_{\conf}(2,m,\alpha),$ $\frac{4\alpha}{m}-1\}$,
then for the corresponding $\gamma$ in Lemma \ref{lem3.5}, one has
$\frac{1}{m+2}\leq\gamma=1-\frac{2\nu}{m+2} -\frac{2m+6}{r(m+2)}\leq
1+\frac{1}{m+2}$. For \(0<\nu\leq\frac{m}{4}\), define
\begin{equation}\label{equ:4.2}
  M_k=\|t^\nu u_k\|_{L^r([1,\infty)\times\R^2)}
  +\|t^{\nu}|D_x|^{\gamma-\frac{1}{m+2}}u_k\|_{L^{q_0}([1,\infty)\times\R^2)}.
\end{equation}
By Lemma~\ref{lem3.3} and the assumption on the initial data in Theorem ~\ref{thm1.2}, one has
\begin{equation}\label{equ:4.7}
M_0\leq C(\| u_0\|_{\dot{H}^s(\R^2)}+\|u_1\|_{\dot{H}^{s-\frac{2}{m+2}}(\R^2)})
\le C\epsilon_0,
\end{equation}
where $s=1-\frac{2\nu}{m+2} -\frac{2m+6}{(m+2)r}$ and $r\ge q_0$.

Suppose that for $l=1,2,\dots,k$,
\begin{equation}\label{equ:4.3}
M_l\leq 2M_0.
\end{equation}
We will prove that \eqref{equ:4.3} holds for $l=k+1$ and small $\epsilon_0>0$.

Note that the following Leibnitz's rule of
fractional derivatives holds for $F(u)=|u|^p$ (see \cite{Chr1, Chr2}):
\begin{equation}\label{equ:4.5}
\begin{split}
	&\bigl\|t^{\alpha -\nu}|D_x|^{\gamma-\frac{1}{m+2}}F(u)(s,\cdot)\bigr\|_{L^{p_1}
(\R^2)} \\
	&\leq\|t^{(p-1)\nu}
F'(u)(s,\cdot)\|_{L^{p_2}(\R^2)}\|t^{\nu}|D_x|
^{\gamma-\frac{1}{m+2}}u(s,\cdot)\|_{L^{p_3}(\R^2)},
\end{split}
\end{equation}
where $\frac{1}{p_1}=\frac{1}{p_2}+\frac{1}{p_3}$ with $p_i\ge 1$
($1\le i\le 3$) and $0\leq\gamma-\frac{1}{m+2}\leq 1$.

By
\[
\left(\partial_t^2-t^m \Delta\right)(u_{k+1}-u_0) =t^\alpha F(u_k),
\]
it follows from Lemma~\ref{lem3.5} and \eqref{equ:4.5} that
\begin{equation}\label{equ:4.4}
\begin{aligned}
  M_{k+1}&\leq C\,\bigl\|t^{-\nu}|D_x|^{\gamma-\frac{1}{m+2}} (t^\alpha F(u_k))\bigr\|_{L^{p_0}([1,\infty)\times\R^2)}
  +M_0 \\
  &\leq C\,\bigl\|t^{\alpha -\nu}|D_x|^{\gamma-\frac{1}{m+2}} F(u_k)\bigr\|_{L^{p_0}([1,\infty)\times\R^2)}+M_0 \\
&\leq C\,\|t^{(p-1)\nu} F'(u_k)\|_{L^{\frac{m+3}{2(\nu+1)}}([1,\infty)\times\R^2)}
\bigl\|t^{\nu}|D_x|^{\gamma-\frac{1}{m+2}} u_k\bigr\|_{L^{q_0}([1,\infty)\times\R^2)}+M_0 \\
&\leq C\|t^{(p-1)\nu} F'(u_k)\|_{L^{\frac{m+3}{2(\nu+1)}}([1,\infty)\times\R^2)}M_k+M_0.
\end{aligned}
\end{equation}
On the other hand, by H\"{o}lder's inequality, \eqref{equ:4.3} and the smallness of $\epsilon_0$, we arrive at
\begin{equation}\label{equ:4.6}
\|t^{(p-1)\nu} F'(u_k)\|_{L^{\frac{m+3}{2(\nu+1)}}([1,\infty)\times\R^2)}
\leq C\,\|t^{\nu} u_k\|_{L^r([1,\infty)\times\R^2)}^{p-1}\leq CM_k^{p-1}\leq
C\epsilon_0^{p-1}\leq\frac{1}{2}.
\end{equation}
This, together with \eqref{equ:4.4} and  \eqref{equ:4.6}, yields
\[
M_{k+1}\leq\frac{1}{2}M_k+M_0\leq 2M_0.
\]
Therefore, $M_k\leq 2M_0$ holds for any $k\in\Bbb N$ by the induction method.

Let $N_k=\|
t^{\nu}(u_k-u_{k-1})\|_{L^{q_0}([1,\infty)\times\R^2)}$. Then by Lemma \ref{lem3.5} with $q=q_0$ and $\gamma=\frac{1}{m+2}$,
we have that for small $\epsilon_0$,
\begin{equation}\label{equ:4.8}
\begin{aligned}
N_{k+1}&=\|t^{\nu}(u_{k+1}-u_{k})\|_{L^{q_0}([1,\infty)\times\R^2)}
\leq\|t^{\alpha-\nu} \left( F(u_{k})-F(u_{k-1})\right) \|_{L^{p_0}([1,\infty)\times\R^2)} \\
&\leq \bigl(\|t^{\nu} u_k\|_{L^r([1,\infty)\times\R^2)}+\|t^{\nu} u_{k-1}\|_{L^r([1,\infty)\times\R^2)}\bigr)^{p-1}
\| t^{\nu}(u_k-u_{k-1})\|_{L^{q_0}([1,\infty)\times\R^2)} \\
&\leq(M_k+M_{k-1})^{p-1}\| t^{\nu}(u_k-u_{k-1})\|_{L^{q_0}([1,\infty)\times\R^2)} \\
&\leq C\epsilon_0^{p-1}\| t^{\nu}(u_k-u_{k-1})\|_{L^{q_0}([1,\infty)\times\R^2)} \\
&\leq\frac{1}{2}\,\| t^{\nu}(u_k-u_{k-1})\|_{L^{q_0}([1,\infty)\times\R^2)}
=\frac{1}{2}N_k
\end{aligned}
\end{equation}
Therefore, for any fixed compact set $K\Subset [1,+\infty)\times\R^2$,
\[\|u_k-u_{k-1}\|_{L^{q_0}(K)}\leq C_{K,\beta}\| t^{\nu}(u_k-u_{k-1})\|_{L^{q_0}([1,\infty)\times\R^2)}
\leq C_{K,\beta}\frac{1}{2^k}\| t^{\nu}u_0\|_{L^{q_0}([1,\infty)\times\R^2)} \rightarrow0.\]
Thus there exists \(u\in L^{q_0}_{loc}([1,\infty)\times\R^2)\) such that $u_k\rightarrow u$ in $L^{q_0}_{loc}([1,\infty)\times\R^2)$ and hence in
$\mathcal{D}'([1,\infty)\times\R^2)$.
On the other hand, for any fixed compact set
$K\Subset [1,\infty)\times\R^2$, one has
\begin{equation}\label{equ:4.9}
\begin{aligned}
\| t^{\alpha}F(u_k)-t^{\alpha}F(u)\|_{L^1(K)}
& \leq C_K\| t^{\alpha- \nu}\left( F(u_k)-F(u)\right) \|_{L^{p_0}(K)} \\
& \leq C_K(\| t^{\nu}u_k\|_{L^r(K)}+\| t^{\nu}u\|_{L^r(K)})^{p-1}\| t^{\nu}(u_k-u)\|_{L^{q_0}(K)} \\
\leq& C_K\epsilon_0^{p-1}\|t^{\nu} (u_k-u)\|_{L^{q_0}(K)}\rightarrow 0 \quad as \quad k\rightarrow\infty.
\end{aligned}
\end{equation}
This means $t^{\alpha}F(u_k)\rightarrow t^{\alpha}F(u)$ in $L^1_{loc}([1,\infty)\times\R^2)$ and
$u$ is a weak solution of \eqref{YH-4}.

By $u_k\rightarrow u$ in $L^{q_0}_{loc}([1,\infty)\times\R^2)$,
then there exists a subsequence, which is
still denoted by $\{u_k\}$, such that $u_k\rightarrow u$ a.e.
Together with $\|t^{\nu} u_k\|_{L^r([1,\infty)\times\R^2)}\le 2M_0$ and Fatou's
lemma, this yields
$\|t^{\nu} u\|_{L^r([1,\infty)\times\R^2)}\leq2M_0.$
Therefore \(u_k\rightarrow u\in L^r([1,+\infty)\times\R^2)\) holds.

Until now we have proved the global existence of \eqref{YH-4} for
\(r=\frac{m+3}{2(\nu+1)\left(p-1\right)}\geq q_0=\frac{2m+6}{m+1-2\nu}\) or equivalently \(p\geq q_0-1\).
Note that under the condition  $p\geq\frac{4\alpha}{m}-1$,
when \(\nu= \frac{\alpha}{p+1}\), the restricted condition \(0<\nu\leq\frac{m}{4}\) in Lemma \ref{lem3.3} is satisfied.
Furthermore, for the choice of \(\nu= \frac{\alpha}{p+1}\), \(p\geq q_0-1\) is equivalent to \(p\geq\frac{m+5+2\alpha}{m+1}
=p_{\conf}(2,m,\alpha)\). Thus the global existence of \eqref{YH-4} can be shown when $p\ge \max\{p_{\conf}(2,m,\alpha), \frac{4\alpha}{m}-1\}$.
\end{proof}

Next we focus on the proof of Theorem ~\ref{th-L}.

\begin{proof}[Proof of Theorem~\ref{th-L}]

From  Remark \ref{L-Y}, it suffices to treat the remaining case $p_{\conf}(2,m,\alpha)\leq p<p_{\conf}(2,m,0)=\f{m+5}{m+1}$.
To this end, we consider the following 2-D problem
\begin{equation}\label{2D}
\left\{ \enspace
\begin{aligned}
&\partial_t^2 \t w-t^{m} \Delta \t w=F(t,x),&&
(t,x)\in [1,\infty)\times \R^{2},\\
&\t w(1, x)=u_0(x),\quad \p_t \t w(1, x)=u_1(x),&&x\in\R^2,
\end{aligned}
\right.
\end{equation}
where $u_0, u_1 \in C_{0}^{\infty}(\mathbb{R}^{2})$, $\operatorname{supp}(u_0, u_1) \subseteq
B(0, M)$ and $F(t,x)\equiv0$ when $|x|>\phi_m(t)+M$.

Let $\t w=v+w$, where $v$ solves the 2-D homogeneous problem
\begin{equation}\label{a-1}
\left\{ \enspace
\begin{aligned}
&\partial_t^2 v-t^{m} \Delta v=0, &&
(t,x)\in [1,\infty)\times \R^{2},\\
&v(1, x)=u_0(x),\quad \p_t v(1, x)=u_1(x),&&x\in\R^2
\end{aligned}
\right.
\end{equation}
and $w$ is a solution of the following 2-D inhomogeneous problem
\begin{equation}\label{Y-3}
\left\{ \enspace
\begin{aligned}
&\partial_t^2 w-t^m\triangle w=F(t,x), &&
(t,x)\in [1,\infty)\times \R^{2},\\
&w(1,x)=0,\quad \partial_tw(1,x)=0,&&x\in\R^2.
\end{aligned}
\right.
\end{equation}

For the solution $v$ of \eqref{a-1}, it follows from Lemma \ref{th2-1-S} with $\al=0$ that
\begin{equation}\label{a-2-M}
\|(\left(\phi_{m}(t)+M\right)^{2}-|x|^{2})^{\gamma} v\|_{L^{q}\left(\left[1, +\infty\right) \times \mathbb{R}^{2}\right)}
\leq C\big(\|u_0\|_{W^{\frac{m+3}{m+2}+\delta, 1}\left(\mathbb{R}^{2}\right)}+\|u_1\|_{W^{\frac{m+1}{m+2}+\delta, 1}\left(\mathbb{R}^{2}\right)}\big),
\end{equation}
where
the positive constants $\gamma$ and $\delta$ satisfy
\begin{equation}\label{con3-M}
0<\gamma<\frac{m+1}{m+2}-\frac{m+4}{(m+2) q}, \quad 0<\delta<\frac{m+1}{m+2}-\gamma-\frac{1}{q}.
\end{equation}

For the solution $w$ of \eqref{Y-3}, it follows from Lemma \ref{lem3.4-SS} with $\al=0$ that
\begin{equation}\label{a-3-M}
\begin{split}
\|(\big(\phi_m(t)+M\big)^2-|x|^2)^{\gamma_1}&w\|_{L^q([1, \infty)\times \mathbb{R}^{2})}
\leq C\|(\big(\phi_m(t)+M\big)^2-|x|^2)^{\gamma_2}F\|_{L^{\frac{q}{q-1}}([1, \infty)\times \mathbb{R}^{2})},
\end{split}
\end{equation}
where   the positive constants $\gamma_1$, $\gamma_2$ and $q$ satisfy
\begin{equation}\label{con4-M}
0<\gamma_1<\frac{m+1}{m+2}-\frac{m+4}{(m+2)q}, \quad
\gamma_2>\frac{1}{q}, \quad 2\leq q\leq1+\f{m+5}{m+1}.
\end{equation}
Based on \eqref{a-2-M} and \eqref{a-3-M}, we shall use a standard Picard iteration  to
prove the global existence of \eqref{YH-4} under the conditions of Theorem ~\ref{th-L}.

Let $u_{-1}\equiv0$ and  $u_k$ ($k\in\Bbb N_0$) solve
\begin{equation}\label{a-4-M}
\left\{ \enspace
\begin{aligned}
&\partial_t^2 u_k-t^m\triangle u_k=t^\alpha |u_{k-1}|^p, &&
(t,x)\in [1,\infty)\times \R^{2},\\
&u_k\big(1,x\big)={u}\big(1,x\big),\quad \partial_tu_k\big(1, x\big)=\partial_t{u}\big(1, x\big),&&x\in\R^2,
\end{aligned}
\right.
\end{equation}
where $m>0$, $-2<\al<0$ and $p_{\conf}(2,m,\alpha)\leq p<p_{\conf}(2,m,0)=\f{m+5}{m+1}$. It is not difficult to verify that one can
fix a positive number $\gamma$ such that
\begin{equation}\label{a-6-M}
\frac{1}{p(p+1)}+\frac{\alpha}{(m+2)p} <\gamma<\frac{m+1}{m+2}-\frac{m+4}{(m+2)(p+1)}.
\end{equation}
Denote
\begin{align*}
M_k=&\|(\big(\phi_m(t)+M\big)^2-|x|^2)^\gamma u_k\|_{L^q([1,\infty)\times\mathbb{R}^2)}, \\
N_k=&\|(\big(\phi_m(t)+M\big)^2-|x|^2)^\gamma (u_k-u_{k-1})\|_{L^q([1,\infty)\times\mathbb{R}^2)},
\end{align*}
where $q=p+1$. Note that \(u_0\) solves
\begin{equation}\label{a-5-M}
\begin{cases}
&\partial_t^2 u_0-t^m\triangle u_0=0, \\
&u_0\big(1,x\big)={u}\big(1,x\big)\quad \partial_tu_0\big(1,x\big)=\partial_t{u}\big(1, x\big).
\end{cases}
\end{equation}
Thus it follows from \eqref{a-2-M} and the assumption on the initial data that
\begin{equation}\label{data-1}
  M_0\leq C (\|u_0\|_{W^{\frac{m+3}{m+2}+\delta, 1} \left(\mathbb{R}^{2}\right)}+ \|u_1\|_{W^{\frac{m+1}{m+2}+\delta, 1} \left(\mathbb{R}^{2}\right)}) \leq C_0\ve_0.
\end{equation}
For $j$, $k\geq0$,
\begin{equation*}
\begin{cases}
&\partial_t^2 (u_{k+1}-u_{j+1})-t^m \Delta (u_{k+1}-u_{j+1}) =V(u_k,u_j)(u_k-u_j),   \\
&(u_{k+1}-u_{j+1})\big(1, x\big)=0, \quad \partial_t(u_{k+1}-u_{j+1})\big(1, x\big)=0,
\end{cases}
\end{equation*}
where
$$\big|V(u_k,u_j)\big|\le Ct^\alpha (|u_k|+|u_j|)^{p-1}.$$
Note that by  the support conditions of \(w\) and \(F\), one can obtain that for $t\geq 1$,
\[1\leq \phi_m(t)+M-|x|\leq \phi_m(t)+M+|x|\le C\phi_m(t)\le Ct^{\frac{m+2}{2}}.\]
Then
\begin{equation}\label{a-7-M}
t^\alpha=c_m\phi_{m}^{\frac{2\alpha}{m+2}}(t)\leq C\Big(\big(\phi_m(t)+2\big)^2-|x|^2\Big)
  ^{\frac{\alpha}{m+2}}.
\end{equation}
In addition, by \eqref{con4-M} and \eqref{a-6-M}, we have
$$
\gamma<\frac{m+1}{m+2}-\frac{m+4}{(m+2)q}, \quad p\gamma-\f{\al}{m+2}>\frac{1}{q}, \quad q\leq1+\f{m+5}{m+1}\quad\text { and } \quad q=p+1.
$$
Then it follows from \eqref{a-3-M}, \eqref{con4-M} and \eqref{a-7-M} with \(\gamma_1=\gamma\) and \(\gamma_2=p\gamma-\frac{\alpha}{m+2}\) that
\begin{equation}\label{a-8-M}
\begin{split}
&\|(\big(\phi_m(t)+2\big)^2-|x|^2)^\gamma (u_{k+1}-u_{j+1})\|_{L^q([1,\infty)\times\mathbb{R}^2)} \\
&\leq C\|(\big(\phi_m(t)+2\big)^2-|x|^2)^{p\gamma-\f{\alpha}{m+2}}V(u_k,u_j)(u_k-u_j)\|_{L^{\frac{q}{q-1}}([1,\infty)\times\mathbb{R}^2)} \\
&\leq C\|(\big(\phi_m(t)+2\big)^2-|x|^2)^{p\gamma}(|u_k|+|u_j|)^{p-1}(u_k-u_j)\|_{L^{\frac{q}{q-1}}([1,\infty)\times\mathbb{R}^2)}  \\
&\leq\|(\big(\phi_m(t)+2\big)^2-|x|^2)^\gamma (|u_k|+|u_j|)\|_{L^q([1,\infty]\times\mathbb{R}^2)}^{p-1} \\
&\quad \times \|(\big(\phi_m(t)+2\big)^2-|x|^2)^\gamma (u_k-u_j)\|_{L^q([1,\infty)\times\mathbb{R}^2)} \\
&\leq C\big(M_k
+M_j\big)^{p-1}\|(\big(\phi_m(t)+2\big)^2-|x|^2)^\gamma (u_k-u_j)\|_{L^q([1,\infty)\times\mathbb{R}^2)}.
\end{split}
\end{equation}
By $M_{-1}=0$, then \eqref{a-8-M} gives
$$M_{k+1}\leq M_0+\frac{M_k}{2}\quad \text{for} \quad C\big(M_k\big)^{p-1}\leq\frac{1}{2},$$
which implies that the boundedness of $\{M_k\}(k\in\Bbb N_0)$ is obtained for small $\varepsilon_0>0$. Similarly, we have
$$N_{k+1}\leq\frac{1}{2}N_k.$$
Therefore, there exists a function $u$ with $\big((\phi_{m}(t)+2)^{2}-|x|^{2}\big)^{\gamma} u \in L^{q}\left(\left[1, \infty\right) \times \mathbb{R}^{2}\right)$ such that $$\big(\left(\phi_{m}(t)+2\right)^{2}-|x|^{2}\big)^{\gamma} u_{k} \rightarrow\big(\left(\phi_{m}(t)+2\right)^{2}-|x|^{2}\big)^{\gamma} u \quad\text{in}~ L^{q}\left(\left[1, \infty\right) \times \mathbb{R}^{2}\right).$$
On the other hand, by $M_k\leq 2C_0\ve_0$ and direct computation, one has for any compact set $K\subseteq \left[1, \infty\right)\times \R^2$,
\begin{align*}
&\left\|t^\alpha\left|u_{k+1}\right|^p-t^\alpha\left|u_k\right|^p\right\|_{L^{\frac{q}{q-1}}(K)} \leq C(K)N_{k+1}\le C2^{-k}.
\end{align*}
Then $t^{\alpha}\left|u_{k}\right|^{p} \rightarrow t^{\alpha}|u|^{p}$ in $L_{l o c}^{1}\left(\left[1, \infty\right) \times \mathbb{R}^{2}\right)$.
Thus $u$ is a weak solution of \eqref{YH-4} in the sense of distributions and then Theorem \ref{th-L} is shown.
\end{proof}

\section{Proof of Theorems~\ref{YH-1}}

Based on Theorems ~\ref{thm1.2}-\ref{th-L}, we start to prove Theorem ~\ref{YH-1}.

\begin{proof}[Proof of Theorem 1.1]

The proof procedure will be divided into two parts.

\subsubsection*{Part 1. $\boldmath \mu\in (0, 1)$}

Set $\mu=\f{m}{m+2}$ and take $\f{2}{m+2}t^{1+\f{m}{2}}$ as $t$, then for $t\ge 1$,
\eqref{YH-3} is equivalent to
\begin{equation}\label{equ:mp}
\p_t^2 u-\Delta u+\f{m}{(m+2)t}\,\p_t u=t^{\frac{2(\alpha-m)}{m+2}}|u|^p.
\end{equation}
Let $\al=m$ in \eqref{equ:mp}. Then \eqref{equ:mp} is equivalent to \eqref{equ:eff1} with \(\mu=\frac{m}{m+2}\).
Substituting $\alpha=m=\f{2\mu}{1-\mu}$  into $r=\frac{m+3}{2(\nu+1)}\left(p-1\right)$
with $\nu=\f{\alpha}{p+1}$ and $
s=1-\f{2(\alpha+2)}{(m+2)(p-1)}$ in Theorem \ref{thm1.2}, we have $r=\frac{3-\mu}{2((1-\mu)p+1+\mu)}\left(p^2-1\right)$
and $s=1-\f{2}{p-1}$. Therefore, Theorem \ref{YH-1} (i) is shown.

\subsubsection*{Part 2. $\boldmath \mu\in (1, 2)$}

Let
\[v(t,x)=t^{\mu-1}u(t,x).\]
Then the equation in \eqref{equ:eff1} can be written as
\begin{equation}\label{equ:shift1}
	\partial_t^2 v-\Delta v +\f{2-\mu}{t}\p_tv=t^{(p-1)(1-\mu)}|v|^p.
\end{equation}
Comparing \eqref{equ:shift1} with \eqref{equ:mp}, one can see that the global existence result of \eqref{equ:eff1}
can be derived from \eqref{equ:mp} for \(\mu=2-\frac{m}{m+2}\) and \(\frac{2(\alpha-m)}{m+2}\ge (p-1)(1-\mu)\).
This leads to the following restriction for $\mu\in(1,2)$,
\begin{equation}\label{equ:lbp1}
	p\geq1+m-\alpha.
\end{equation}
Therefore, if \(\alpha>\frac{m(m+3)}{m+2}\), then
the lower bound of \(p\) should be \(\max\{\frac{4 \alpha}{m}-1, 1+m-\alpha\}\).
Let \(\frac{4\alpha}{m}-1=1+m-\alpha\), which yields
\begin{equation*}\label{YH-18}
	\alpha=:\alpha(m) =\frac{m(m+2)}{m+4}
\end{equation*}
and
\begin{equation}\label{YH-19}
	p\geq1+m-\alpha(m)=1+\frac{2m}{m+4}=\frac{4}{\mu}-1.
\end{equation}
Obviously,
\[\alpha(m)=\frac{m+2}{m+4}\cdot m<\frac{m(m+3)}{m+2}.\]

On the other hand, if \(\alpha\leq\frac{m(m+3)}{m+2}\),
then the range of \(p\) with the global existence of \eqref{equ:shift1} is
\(p\geq\max\{p_{\conf}(2,m,\alpha), 1+m-\alpha\}\). By setting \(p_{\conf}(2,m,\alpha)=1+m-\alpha\), one has
\begin{equation}\label{equ:a1}
	\alpha=\t\alpha(m) =m-\frac{2(m+2)}{m+3}
		=\frac{2(2-\mu)}{\mu-1}-\frac{4}{\mu+1}.
\end{equation}
Note that the restriction \(\t\alpha(m)\leq\frac{m(m+3)}{m+2}\) holds.
In this case,
the lower bound of \(p\) is
\begin{equation}\label{YH-15}
p_{\conf}(2,m,\t\alpha(m))=1+m-\t\alpha(m)=1+\frac{2(m+2)}{m+3}=\frac{\mu+5}{\mu+1}.
\end{equation}

In addition, \(\t\alpha(m)\geq0\) holds when
\(\frac{2(2-\mu)}{\mu-1}-\frac{4}{\mu+1}\geq0\) and further
\begin{equation}\label{equ:mu-2}
		\mu^2+\mu-4\leq0.
	\end{equation}
This derives \(\mu\leq\mu_0=\f{\sqrt{17}-1}{2}\).  In addition, by  \eqref{YH-15} and Theorem~\ref{thm1.2}
with $\alpha=\frac{2(2-\mu)}{\mu-1}-\frac{4}{\mu+1}$, $m=\f{2(2-\mu)}{\mu-1}$ and $\nu=\f{\alpha}{p+1}$,
Theorem 1.1 (ii) is obtained.

When \(\t\alpha(m)<0\) (equivalently \(\mu\in(\mu_0, 2)\)),
by \eqref{equ:a1} and \eqref{equ:mu-2},  $\alpha=\t\alpha(m)
=\frac{2(2-\mu)}{\mu-1}-\frac{4}{\mu+1}$ holds. Then Theorem~\ref{YH-1} (iii-a) can be derived from
Theorem~\ref{th-L} (i) with $m=\f{2(2-\mu)}{\mu-1}$ and $\alpha=\frac{2(2-\mu)}{\mu-1}-\frac{4}{\mu+1}$.
Indeed, by a direct computation, we have the following facts for \(\mu\in(\mu_0, 2)\),
\begin{equation}\label{c11}
\begin{aligned}
&\quad  p_{conf}(2, m, \al)\leq p<p_{conf}(2,m,0)\\
&\Leftrightarrow p_{conf}(2,\mu)\leq p<\frac{3\mu-1}{3-\mu},
\end{aligned}
\end{equation}
\begin{equation}\label{c12}
0<\delta<\f{m+1}{m+2}-\gamma-\f{1}{p+1}\Leftrightarrow 0<\delta<\f{3-\mu}{2}-\gamma-\f{1}{p+1}
\end{equation}
and
\begin{equation}\label{c13}
\begin{aligned}
&\quad\quad\f{1}{p(p+1)}+\f{\al}{(m+2)p}<\gamma<\f{m+1}{m+2}-\f{m+4}{(m+2)(p+1)}\\
&\Leftrightarrow
\f{1}{p(p+1)}-\f{\mu^2+\mu-4}{(\mu+1)p}<\gamma<\f{(3-\mu)p-3(\mu-1)}{2(p+1)}.
\end{aligned}
\end{equation}
Therefore, the conditions in Theorem~\ref{YH-1} (iii-a) are fulfilled.

By Theorem \ref{th-L} (ii), when $-2<\al=\t\alpha(m)<0$, the global existence result of \eqref{YH-4} for
$p\geq p_{conf}(2,m,0)$ can be obtained. Then substituting $m=\f{2(2-\mu)}{\mu-1}$
into $r=\frac{m+3}{2}\left(p-1\right)$
and $s=1-\f{4}{(m+2)(p-1)}$ in Theorem \ref{th-L} (ii), we arrive at $r=\f{\mu+1}{2(\mu-1)}(p-1)$
and $s=1-\f{2(\mu-1)}{p-1}$. Therefore, Theorem \ref{YH-1} (iii-b) is shown.

\end{proof}

\vskip 0.2 true cm

{\bf Acknowledgements}. He Daoyin and Yin Huicheng wish to express their  deep gratitudes to Professor Ingo Witt
(University of G\"ottingen, Germany) for his constant interests in this problem and many fruitful discussions in the past.

\vskip 0.2 true cm

{\bf \color{blue}{Conflict of Interest Statement:}}

\vskip 0.1 true cm

{\bf The authors declare that there is no conflict of interest in relation to this article.}

\vskip 0.2 true cm
{\bf \color{blue}{Data availability statement:}}

\vskip 0.1 true cm

{\bf  Data sharing is not applicable to this article as no data sets are generated
during the current study.}

\vskip 0.2 true cm



\begin{thebibliography}{99}

\bibitem{Chen-Mei} Chen Shaohua, Li Haitong, Li Jingyu, Mei Ming, Zhang Kaijun,
Global and blow-up solutions for compressible Euler equations with time-dependent damping,
J. Differential Equations \textbf{268} (2020), no. 9, 5035--5077.


\bibitem{C-R} Chen Wenhui, M. Reissig, \textit{
  On the critical exponent and sharp lifespan estimates for semilinear damped wave
  equations with data from Sobolev spaces of negative order},
J. Evol. Equ. \textbf{23} (2023), no. 1, Paper No. 13, 21 pp.

\bibitem{Chr1} M.~Christ, \textit{Lectures on singular integral
  operators}, CBMS Regional Conf. Ser. in Math., Vol. \textbf{77}, Amer. Math. Soc.,
  Providence, RI, 1990.

\bibitem{Chr2} M.~Christ, K.~Kiselev, \textit{A maximal functions associated to
filtrations}, J. Funct. Anal. \textbf{179} (2001), 409--425.

\bibitem{DA-0}  M. D'Abbicco, Small data solutions for the Euler-Poisson-Darboux equation with a
power nonlinearity,
J. Differential Equations \textbf{286} (2021), 531-556.


\bibitem{DA}
 M. D'Abbicco, \textit{The threshold of effective damping for semilinear wave equations},
 Math. Methods Appl. Sci. \textbf{38} (6) (2015),
  1032--1045.

\bibitem{Rei2} M.~D'Abbicco, S.~Lucente, M.~Reissig,
  \textit{Semilinear wave equations with effective damping}, Chin.
  Ann. Math. Ser.~B \textbf{34} (2013), 345--380.



 \bibitem{DL}
  M. D'Abbicco, S. Lucente, \textit{NLWE with a special scale invariant damping in odd space dimension},
  Discrete Contin. Dyn. Syst., 10th AIMS Conference. Suppl., (2015), 312-319.


\bibitem{Rei1} M.~D'Abbicco, S.~Lucente, M.~Reissig, \textit{A
  shift in the Strauss exponent for semilinear wave equations with a
  not effective damping}, J. Differential Equations \textbf{259} (2015),
  5040--5073.

\bibitem{EGR} M.R. Ebert, G. Girardi, M. Reissig,
\textit{Critical regularity of nonlinearities in semilinear classical
damped wave equations,} Math. Ann. \textbf{378} (2020), 1311-1326.



\bibitem{Fuj} H.~Fujita, \textit{On the blowing up of solutions of the
  Cauchy Problem for $u_t=\Delta u+u^{1+\al}$},
  J. Fac. Sci. Univ. Tokyo \textbf{13} (1966), 109--124.




 \bibitem{FIW}
  K. Fujiwara, M. Ikeda, Y. Wakasugi, \textit{Estimates of lifespan and blow-up rates for the wave equation with a
  time dependent damping and a power-type nonlinearity}, Funkc. Ekvacioj \textbf{62} (2) (2019), 157--189. 



 \bibitem{Gal} A. Galstian,  \textit{Global existence for the one-dimensional second order semilinear hyperbolic equations},
J. Math. Anal. Appl. \textbf{344} (1) (2008), 76--98.






\bibitem{Gls1} V.~Georgiev, H.~Lindblad, C.D.~Sogge,
  \textit{Weighted Strichartz estimates and global existence for
    semi-linear wave equations}, Amer. J. Math. \textbf{119} (1997),
  1291--1319.


\bibitem{Gla1} R.T.~Glassey, \textit{Finite-time blow-up for solutions
  of nonlinear wave equations}, Math. Z. \textbf{177} (1981),
  323--340.

\bibitem{Gla2} R.T.~Glassey, \textit{Existence in the large for
  $\Box$u =F(u) in two space dimensions}, Math. Z. \textbf{178}
  (1981), 233--261.

\bibitem{Ha}
N. Hayashi, E. I. Kaikina, P. I. Naumkin,
\textit{Damped wave equation with a critical nonlinearity},
Trans. Amer. Math. Soc. \textbf{358} (2006), 1165--1185.

\bibitem{HLWY-1} He Daoyin, Li Qianqian, Yin Huicheng,
Global small data weak solutions of 2-D semilinear wave equations
with  scale-invariant  damping, IV, Preprint (2025).

\bibitem{HSZ} He Daoyin, Sun Yaqing, Zhang Kangqun,
\textit{Global existence for small amplitude semilinear wave equations with time-dependent scale-invariant damping},
arXiv:2501.01670, Preprint (2025).

\bibitem{HWY1}
He Daoyin,  Witt Ingo, Yin Huicheng, \textit{On the global solution problem for semilinear generalized Tricomi equations, I},
Calc. Var. Partial Differential Equations \textbf{56} (2017), no. 2, Paper No. 21, 24 pp.

\bibitem{HWY2}
He Daoyin, Witt Ingo, Yin Huicheng, \textit{On semilinear Tricomi equations with critical exponents or in two space dimensions},
J. Differential Equations \textbf{263} (2017), no. 12, 8102--8137.

\bibitem{HWY3}
He Daoyin, Witt Ingo, Yin Huicheng, \textit{On the Strauss index of semilinear Tricomi equation},
Commun. Pure Appl. Anal. \textbf{19} (2020), no. 10, 4817--4838.

\bibitem{HWY4} He Daoyin, Witt Ingo, Yin Huicheng, \textit{On the
  global solution problem of semilinear generalized Tricomi equations,
  II}, Pacific J. Math. \textbf{314} (2021), no. 1, 29--80.

\bibitem{HWY5}
 He Daoyin, Witt Ingo, Yin Huicheng, \textit{Finite time blowup for the 1-D semilinear generalized Tricomi equation with subcritical or critical exponents},
Methods Appl. Anal. \textbf{28} (2021), no. 3, 313--324.


\bibitem{Hou-1} Hou Fei, Witt Ingo, Yin Huicheng, Global existence or blowup of smooth solutions to 3-D
 compressible Euler equations with time-dependent damping,  Pacific Journal of Mathematics, Vol. \textbf{292} (2018), No. 2, 389--426.


 \bibitem{Hou-2} Hou Fei, Yin Huicheng, On the global existence and blowup of smooth solutions to the multi-dimensional
 compressible Euler equations with time-depending damping, Nonlinearity, Vol. \textbf{30} (2017), No. 6, 2485--2517.


\bibitem{II}
M. Ikeda, T. Inui, M. Okamoto, Y. Wakasugi, \textit{$L^p-L^q$
estimates for the damped wave equatioan and the critical exponent for the nonlinear problem with slowly decaying data},
Commun. Pure Appl. Anal.  \textbf{18} (2019), no. 4, 1967--2008.



\bibitem{IS}
M. Ikeda, M. Sobajima, \textit{Life-span of solutions to semilinear wave equation with time-dependent critical damping for specially localized initial data}, Math. Ann. \textbf{372} (3-4) (2018), 1017--1040.

 \bibitem{Imai} T. Imai, M. Kato, H. Takamura, K. Wakasa,
 The lifespan of solutions of semilinear wave equations with the scale-invariant damping in two space dimensions,
J. Differential Equations \textbf{269} (2020), no. 10, 8387--8424.

\bibitem{Joh} F.~John, {\it Blow-up of solutions of nonlinear wave
  equations in three space dimensions}, Manuscripta Math. \textbf{28}
  (1979), 235--265.

  \bibitem{Kato} M. Kato, M. Sakuraba,
 \textit{Global existence and blow-up for semilinear damped wave equations in three space dimensions},
 Nonlinear Anal. \textbf{182} (2019),  209--225.





\bibitem{K} H. Kubo, \textit{On the critical decay and power for semilinear wave equations in odd space dimensions,}
Discrete Contin. Dynam. Systems \textbf{2} (1996), no. 2, 173--190.




\bibitem{LT}
Lai Ning-An, H. Takamura, \textit{Blow-up for semilinear damped wave equations with subcritical exponent in the scattering case}, Nonlinear Anal. \textbf{168} (2018), 222--237.

\bibitem{LTW}
 Lai Ning-An, H. Takamura, K. Wakasa, \textit{Blow-up for semilinear wave equations with the scale invariant damping and super-Fujita exponent}, J. Differential Equations \textbf{263} (9) (2017), 5377--5394.

\bibitem{LWY} Li Qianqian, Wang Dinghuai, Yin Huicheng,
Global small data weak solutions of 2-D semilinear wave equations
with  scale-invariant  damping, arXiv:2503.18677, Preprint, 2025.


\bibitem{LY} Li Qianqian, Yin Huicheng,
Global small data weak solutions of 2-D semilinear wave equations
with  scale-invariant  damping, III, arXiv.6614053, Preprint (2025).




\bibitem{LZ}Lai Ning-An, Zhou Yi,
 \textit{Global existence for semilinear wave equations with scaling invariant damping in 3-D},
Nonlinear Anal. \textbf{210} (2021), Paper No. 112392, 12 pp.

\bibitem{Li} Li Ta-Tsien, Zhou Yi, \textit{Breakdown of solutions to
   $\Box u + u_t = |u|^{1+\alpha}$},
Discrete Contin. Dynam. Systems Ser. A \textbf{1} (4) (1995), 503-520.



\bibitem{Zhai}  Lin Jiayun, K.~Nishihara,  Zhai Jian,
  \textit{Critical exponent for the semilinear wave equation with
    time-dependent damping}, Discrete Contin. Dyn. Syst. \textbf{32}
  (2012), 4307--4320.


\bibitem{Gls2} H.~Lindblad, C.D.~Sogge, \textit{On existence and
  scattering with minimal regularity for semilinear wave equations},
  J. Funct. Anal. \textbf{130} (1995), 357--426.

\bibitem{LW}
Liu Mengyun, Wang Chengbo, \textit{Global existence for semilinear damped wave equations in relation with the Strauss conjecture}, Discrete Contin. Dyn. Syst. \textbf{40} (2) (2020), 709--724.


\bibitem{Mei-2} Luan Liping, Mei Ming, Rubino Bruno, Zhu Peicheng,
Large-time behavior of solutions to Cauchy problem for bipolar Euler-Poisson system
with time-dependent damping in critical case,
Commun. Math. Sci. \textbf{19} (2021), no. 5, 1207--1231.



\bibitem{N}
K. Nishihara, \textit{Asymptotic behavior of solutions to the semilinear wave equation with time-dependent damping},
Tokyo J. Math. \textbf{34} (2) (2011), 327--343.



\bibitem{P} A. Palmieri, \textit{A global existence result for a semilinear scale-invariant wave equation in even dimension},
Math. Methods Appl. Sci. \textbf{42} (8) (2019),  2680--2706.




\bibitem{PR}
A. Palmieri, M. Reissig, \textit{A competition between Fujita and Strauss type exponents for blow-up of semi-linear wave
equations with scale-invariant damping and mass}, J. Differential Equations \textbf{266} (2-3) (2019), 1176--1220.



\bibitem{Rua1} Ruan Zhuoping, Witt Ingo,   Yin Huicheng, \textit{The
  existence and singularity structures of low regularity solutions to
  higher order degenerate hyperbolic equations}, J. Differential
  Equations \textbf{256} (2014), 407--460.

\bibitem{Rua2} Ruan Zhuoping, Witt Ingo,   Yin Huicheng, \textit{On
  the existence and cusp singularity of solutions to semilinear
  generalized Tricomi equations with discontinuous initial data},
  Commun. Contemp. Math., \textbf{17} (2015), 1450028 (49 pages).

\bibitem{Rua3} Ruan Zhuoping, Witt Ingo,   Yin Huicheng, \textit{On
  the existence of low regularity solutions to semilinear generalized
  Tricomi equations in mixed type domains}, J. Differential Equations
  \textbf{259} (2015), 7406--7462.

\bibitem{Rua4}  Ruan Zhuoping, Witt Ingo,   Yin Huicheng, \textit{Minimal regularity
solutions of semilinear generalized Tricomi equations},  Pacific J. Math. \textbf{296} (2018), no. 1, 181--226.



\bibitem{Sch} J.~Schaeffer, \textit{The equation $\Box u =|u|^p$ for
  the critical value of $p$}, Proc. Roy. Soc. Edinburgh \textbf{101}
  (1985), 31--44.

\bibitem{Sid} T.~Sideris, \textit{Nonexistence of global solutions to
  semilinear wave equations in high dimensions}, J. Differential
  Equations \textbf{52} (1984), 378--406.


\bibitem{Strauss} W.~Strauss, \textit{Nonlinear scattering theory at
  low energy}, J. Funct. Anal. \textbf{41} (1981), 110--133.


\bibitem{TY}
G. Todorova, B.T. Yordanov, \textit{Critical exponent for a nonlinear wave equation with damping},
J. Differential Equations \textbf{174} (2) (2001),
464--489.



\bibitem{TL1}
Tu Ziheng, Lin Jiayun, \textit{A note on the blowup of scale invariant damping wave equation with sub-Strauss exponent}, arXiv preprint arXiv:1709.00866, 2017.

\bibitem{TL2} Tu Ziheng, Lin Jiayun, \textit{Life-span of semilinear wave equations with scale-invariant damping: critical Strauss exponent case},
Differential Integral Equations \textbf{32} (2019), no. 5-6, 249--264.



\bibitem{W1}
 Y. Wakasugi, \textit{Critical exponent for the semilinear wave equation with scale invariant damping, in: Fourier Analysis},
Birkh\"auser, Cham, 2014, pp. 375--390.


 \bibitem{WY}
 K. Wakasa, B.T. Yordanov, \textit{On the nonexistence of global solutions for critical semilinear wave equations with damping in the scattering case}, Nonlinear Anal. \textbf{180} (2019), 67--74.


 \bibitem{Wirth-1}
J. Wirth,  \textit{Wave equations with time-dependent dissipation. I. Non-effective dissipation,}
J. Differential Equations \textbf{222} (2006), no. 2, 487--514.


 \bibitem{Wirth-2}
J. Wirth,  \textit{Wave equations with time-dependent dissipation. II. Effective dissipation,}
J. Differential Equations \textbf{232} (2007), no. 1, 74--103.



\bibitem{Xu-Yin-1} Xu Gang, Yin Huicheng,
On global multidimensional supersonic flows with vacuum states at infinity,
Arch. Ration. Mech. Anal. \textbf{218} (2015), No. 3, 1189--1238.

\bibitem{Xu-Yin-2} Xu Gang, Yin Huicheng,
3-D global supersonic Euler flows in the infinite long divergent nozzles, SIAM J. Math. Anal. \textbf{53} (2021), no. 1, 133--180.


\bibitem{Yag2} K.~Yagdjian, \textit{Global existence for the
  n-dimensional semilinear Tricomi-type equations}, Comm. Partial
  Diff. Equations \textbf{31} (2006), 907--944.


\bibitem{Yag3} K.~Yagdjian, \textit{The self-similar solutions of the Tricomi-type equations},
 Z. angew. Math. Phys. \textbf{58} (2007), 612--645.


\bibitem{Yor} B.~Yordanov, ~Zhang QiS, \textit{Finite time blow up
  for critical wave equations in high dimensions},
  J. Funct. Anal. \textbf{231} (2006), 361--374.


 \bibitem{ZQ}  Zhang QiS, \textit{A blow-up result for a nonlinear wave equation with damping: the critical case},
 C. R. Acad. Sci., S\'er. \textbf{1} Math. 333 (2) (2001), 109--114.


 \bibitem{Zhou} Zhou  Yi, \textit{Cauchy problem for semilinear wave
  equations in four space dimensions with small initial data},
  J. Differential Equations \textbf{8} (1995), 135--144.


\end{thebibliography}
\end{document}